\newtheorem{lem}{Lemma}[section]
\newtheorem{cor}[lem]{Corollary}
\newtheorem{prop}[lem]{Proposition}
\newtheorem{thm}[lem]{Theorem}
\theoremstyle{definition}
\newtheorem{qn}[lem]{Question}
\numberwithin{equation}{section}
\renewcommand{\phi}{\varphi}                 
\renewcommand{\epsilon}{\varepsilon}
\newcommand\eset{\varnothing}
\newcommand\setm{\setminus}
\newcommand\inv{^{-1}}
\newcommand\conv{\operatorname{conv}}
\newcommand\cC{\mathcal{C}}
\newcommand\bbF{\mathbb{F}}
\newcommand\bbR{\mathbb{R}}
\newcommand\bbZ{\mathbb{Z}}
\newcommand\G{\Gamma}
\renewcommand\S{\Sigma}
\newcommand\s{\sigma}
\newcommand\A{\emph{Ans. }}
\newcommand\Q{\emph{Qn.\ }}
\newcommand\Qs{\emph{Qns.\ }}
\begin{document}

\title{Negative (and Positive) Circles in Signed Graphs: \\ A Problem Collection}

\author{Thomas Zaslavsky}
\address{Binghamton University (SUNY) \\ Binghamton, N.Y., U.S.A.}
\email{\tt zaslav@math.binghamton.edu}

\begin{abstract}
A signed graph is a graph whose edges are labelled positive or negative.  The sign of a circle (cycle, circuit) is the product of the signs of its edges.  Most of the essential properties of a signed graph depend on the signs of its circles.  
Here I describe several questions regarding negative circles and their cousins the positive circles.  Topics include incidence between signed circles and edges or vertices, characterizing signed graphs with special circle properties, counting negative circles, signed-circle packing and covering, signed circles and eigenvalues, and directed cycles in signed digraphs.  
A few of the questions come with answers.
\end{abstract}

\keywords{Signed graph; signed digraph; negative circle; negative cycle; positive circle; positive cycle; signed circle packing; signed circle decomposition; signed circle covering; negative circle numbers; spectral graph theory}

\subjclass[2010]{Primary 05C22; Secondary 05C20, 05C38, 05C50, 05C70, 05C75}

\date{\today}

\maketitle

\begin{center}
In honor and in memory of\\ {\sc Dr.~B.~Devadas~Acharya} (1947--2013)
\end{center}


\section*{Introduction}\label{intro}

A signed graph is a graph with a \emph{signature} that assigns to each edge a positive or negative sign.  To me the most important thing about a signed graph is the signs of its circles,\footnote{A circle is a connected, 2-regular graph.  The common name ``cycle'' has too many other meanings.}
 which are calculated by multiplying the signs of the edges in the circle.  Thus a signature is essentially its list of negative circles, or (of course) its list of positive circles.  I will describe some of the uses of and questions about circles of different signs in a signed graph.  Both theorems and algorithms will be significant.
  
The topic of this report is broad.  Of necessity, I will be very selective and arbitrarily so, omitting many fine contributions.  (Let no one take offense!)

I chose this topic in part because it has many fine open problems, but especially in honor of our dear friend Dr.~B.~Devadas Acharya---``our'' because he was the dear friend of so many.  Among Dr.~Acharya's wide combinatorial interests, I believe signed graphs were close to his heart, one of his---and his collaborator and wife's, Prof.~Mukti Acharya's---first and lasting areas of research.  Circles (or ``cycles'') in signed graphs exemplify well Dr.~B.~D.~Acharya's approach to mathematics, that new ideas and new problems are its lifeblood.  He himself was an enthusiastic and inspiring font of new ideas.  I hope some of his spirit will be found in this survey.

%
\section{Groundwork}\label{ground}

\subsection{Signed graphs}\

A \emph{signed graph} $\S = (\G,\s) = (V,E,\s)$ is defined as an \emph{underlying graph} $\G=(V,E)$, also written $|\S|$, and a signature $\s: E \to \{+,-\}$ (or $\{+1,-1\}$), the sign group.  The sets of positive and negative edges are $E^+(\S)$ and $E^-(\S)$.  
In the literature $\G$ may be assumed to be simple, or it may not (this is graph theory); I do not assume simplicity.  Each circle and indeed each walk $W = e_1e_2\cdots e_l$ has a sign $\s(W) := \s(e_1)\s(e_2)\cdots\s(e_l)$.  $\S$ is called \emph{balanced} if every circle is positive.  

Two important signatures are the all-positive one, denoted by $+\G=(\G,+)$, and the all-negative one, $-\G=(\G,-)$, where every edge has the same sign.  In most ways an unsigned graph behaves like $+\G$, while $-\G$ acts rather like a generalization of a bipartite graph.  In particular, in $+\G$ every circle is positive.  In $-\G$ the even circles are positive while the odd ones are negative, so $-\G$ is balanced if and only if $\G$ is bipartite.

Signed graphs and balance were introduced by Frank Harary\footnote{Signed graphs, like graphs, have been rediscovered many times; but Harary was certainly the first.  K\"onig \cite[Chapter X]{Konig} had an equivalent idea but he missed the idea of labelling edges by the sign group, which leads to major generalizations; cf.\ \cite[Section 5]{BG1}.} in \cite{NB} with this fundamental theorem:

\begin{thm}[Harary's Balance Theorem]\label{T:balance}
A signed graph $\S$ is balanced if and only if there is a bipartition of its vertex set, $V = X \cup Y$, such that every positive edge is induced by $X$ or $Y$ while every negative edge has one endpoint in $X$ and one in $Y$.  
Also, if and only if for any two vertices $v,w$, every path between them has the same sign.
\end{thm}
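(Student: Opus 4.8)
The plan is to prove Harary's Balance Theorem by establishing a cycle of implications among three conditions: (i) $\S$ is balanced, (ii) there is a Harary bipartition $V = X \cup Y$ of the stated kind, and (iii) for every pair of vertices $v,w$, all $v$--$w$ paths have the same sign. I would actually find it cleanest to prove (ii) $\Rightarrow$ (i) $\Rightarrow$ (iii) $\Rightarrow$ (ii), working component by component since balance, the bipartition property, and the path-sign property are all component-wise conditions (an isolated vertex or an edgeless piece is trivially fine).

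For (ii) $\Rightarrow$ (i): given the bipartition $V = X \cup Y$, define $\s': E \to \{+,-\}$ by switching at all vertices of $Y$ — equivalently, observe that the function $\eta: V \to \{+1,-1\}$ with $\eta \equiv +1$ on $X$ and $\eta \equiv -1$ on $Y$ satisfies $\s(uv) = \eta(u)\eta(v)$ for every edge $uv$, precisely by the defining property of the bipartition. Then for any circle $C = v_0 v_1 \cdots v_k v_0$ we get $\s(C) = \prod_i \eta(v_{i-1})\eta(v_i) = \prod_i \eta(v_i)^2 = +1$, so every circle is positive and $\S$ is balanced.

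For (i) $\Rightarrow$ (iii): suppose $\S$ is balanced and let $P, Q$ be two $v$--$w$ paths. If $P$ and $Q$ are internally disjoint, their union is a circle $C$ with $\s(C) = \s(P)\s(Q)$; balance gives $\s(C) = +1$, hence $\s(P) = \s(Q)$. In general the symmetric difference of the edge sets of $P$ and $Q$ decomposes into edge-disjoint circles, each positive by balance, and a short induction (or a direct parity argument comparing $\s(P)\s(Q)$ to the product of those circle signs, noting that edges common to $P$ and $Q$ contribute their sign twice) yields $\s(P) = \s(Q)$. The mild technical care here — handling shared edges and the decomposition of the symmetric difference — is the one place the argument is not completely one-line, so I expect the edge-disjoint-circle bookkeeping to be the main (modest) obstacle.

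For (iii) $\Rightarrow$ (ii): assume all $v$--$w$ paths have equal sign; work in one connected component and fix a root $r$. Define $X$ to be the set of vertices $u$ such that the (well-defined, by hypothesis) sign of an $r$--$u$ path is $+$, and $Y$ the set where it is $-$; put $r \in X$. For any edge $uv$, concatenating an $r$--$u$ path with the edge $uv$ gives an $r$--$v$ walk; reducing to a path and using (iii) shows $\s(r\text{-to-}v) = \s(r\text{-to-}u)\cdot\s(uv)$, so $uv$ is positive exactly when $u,v$ lie on the same side and negative exactly when they lie on opposite sides — which is the required bipartition property. (One should check the concatenation-to-path reduction is legitimate, i.e. that deleting a closed subwalk does not change the sign; this again follows from (iii) applied to the closed part, or from balance which we already have in hand via (i)–(iii) being all equivalent.) Taking the union of such bipartitions over all components completes the proof.
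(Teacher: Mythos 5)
The paper does not actually prove this theorem: it is stated as Harary's foundational result with a citation to Harary's 1953--54 paper, so there is no in-paper proof to compare against, and your argument must stand on its own. It does. The cycle (ii) $\Rightarrow$ (i) $\Rightarrow$ (iii) $\Rightarrow$ (ii) is the standard proof and each step is sound: the observation that a Harary bipartition gives $\s(uv)=\eta(u)\eta(v)$ and hence telescoping positivity around any circle; the fact that the symmetric difference of two paths with the same endpoints is an edge-disjoint union of circles (every vertex has even degree there), so balance forces the two paths to have equal sign; and the rooted construction of $X$ and $Y$ from the well-defined sign of an $r$--$u$ path.

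Two small remarks. First, in the step (iii) $\Rightarrow$ (ii), your fallback justification (``or from balance, which we already have in hand via (i)--(iii) being all equivalent'') is circular: at that point in the chain only (iii) is available as a hypothesis. Your primary justification, however, suffices: (iii) applied to a circle split at two of its vertices shows every circle on at least two vertices is positive, and that positivity is exactly what the concatenation-to-path reduction needs, so the proof is not damaged. Second, since this paper explicitly allows non-simple graphs and counts a loop as a circle (a connected $2$-regular graph), a single negative loop satisfies (iii) vacuously while (i) and (ii) fail; so, as is customary, the path-sign characterization should be read as excluding loops or treating them separately. That is a caveat about the classical statement in the multigraph setting rather than a flaw in your argument.
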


A \emph{bipartition} of a set $V$ is any pair $\{X,Y\}$ of complementary subsets, including the possibility that one subset is empty (in which case the bipartition is not, technically, a partition).  I call a bipartition of $V$ as in the Balance Theorem a \emph{Harary bipartition} of $V$.  The Harary bipartition is unique if and only if $\S$ is connected; if $\S$ is also all positive (all edges are positive), then $X$ or $Y$ is empty.

Harary later defined $\S$ to be \emph{antibalanced} if every even circle is positive and every odd circle is negative; equivalently, $-\S$ is balanced \cite{SDual}.  (The negative of $\S$, $-\S$, has signature $-\s$.)  

A basic question about a signed graph is whether it is balanced; in terms of our theme, whether there exists a negative circle.  If $\S$ is unbalanced, any negative circle provides a simple verification (a \emph{certificate}) that it is unbalanced, since computing the sign of a circle is easy.  The Balance Theorem tells us how to provide a certificate that $\S$ is balanced, if in fact it is; namely, one presents the bipartition $\{X,Y\}$, since any mathematical person can easily verify that a given bipartition is, or is not, a Harary bipartition.  What is hard about deciding whether $\S$ is balanced is to find a negative circle out of the (usually) exponential number of circles, or a Harary bipartition out of all $2^{n-1}$ possible bipartitions.  Fortunately, there is a powerful technique that enables us to quickly find a certificate for imbalance.

\emph{Switching} $\S$ consists in choosing a function $\zeta: V \to \{+,-\}$ and changing the signature $\s$ to $\s^\zeta$ defined by $\s^\zeta(e_{vw}) := \zeta(v)\s(e_{vw})\zeta(w)$.  The resulting switched signed graph is $\S^\zeta := (|\S|,\s^\zeta)$.  It is clear that switching does not change the signs of circles.  Let us denote by $\cC(\S)$ the set of all circles of a signed graph (and similarly for an unsigned graph) and by $\cC^+(\S)$ or $\cC^-(\S)$ the set of all positive or, respectively, negative circles.  Thus, $\cC^+(\S^\zeta) = \cC^+(\S)$.  There is a converse due to Zaslavsky \cite{CSG} and, essentially, Soza\'nski \cite{Soz}.

\begin{thm}\label{T:switching}
Let $\S$ and $\S'$ be two signed graphs with the same underlying graph $\G$.  Then $\cC^+(\S) = \cC^+(\S')$ if and only if $\S'$ is obtained by switching $\S$.  In particular, $\S$ is balanced if and only if it switches to the all-positive signed graph $+\G$.
\end{thm}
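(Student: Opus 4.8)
The plan is to prove the two directions separately. The ``only if'' direction in the displayed ``in particular'' clause (namely, balanced $\Rightarrow$ switches to $+\G$) is really the substantive content, so I would organize the argument around it. The easy direction first: if $\S'=\S^\zeta$ for some switching function $\zeta$, then because $\s^\zeta(e_1)\cdots\s^\zeta(e_l) = \big(\prod_i \zeta(v_{i-1})\zeta(v_i)\big)\s(e_1)\cdots\s(e_l)$ and each vertex of a circle appears exactly twice as an endpoint of its edges, every $\zeta(v)$ cancels in pairs; hence $\s^\zeta(C)=\s(C)$ for every circle $C$, giving $\cC^+(\S')=\cC^+(\S)$. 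This also shows that switching to $+\G$ forces every circle of $\S$ to be positive, i.e.\ $\S$ balanced, which is half of the ``in particular'' statement.

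For the converse I would reduce to connected $\S$ (switching and the circle-sign condition both respect connected components, so one can switch each component independently and paste the functions $\zeta$ together). Assume then that $\cC^+(\S)=\cC^+(\S')$ with $\G$ connected. Fix a spanning tree $T$ of $\G$ and a root $r$. The idea is to define $\zeta$ so that $\S^\zeta$ and $\S'$ agree on $T$, and then use the circle condition to force agreement off $T$. Concretely, I would first switch $\S$ by a function $\zeta_0$ chosen along $T$ (walking outward from $r$, set $\zeta_0(v)$ recursively so that the tree edge into $v$ becomes positive); do the same for $\S'$ with some $\zeta_0'$. After these switches both graphs have all tree edges positive, and it remains to show the two switched signatures agree on the non-tree edges as well. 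For a non-tree edge $e=e_{vw}$, the fundamental circle $C_e$ consisting of $e$ together with the $v$--$w$ path in $T$ has $\s^{\zeta_0}(C_e)=\s^{\zeta_0}(e)$ (tree edges now positive) and likewise for $\S'$; since $C_e\in\cC^+(\S)\iff C_e\in\cC^+(\S')$, the two switched signs of $e$ coincide. Hence the two signed graphs obtained after the tree-switches are literally equal, so $\S'=(\S^{\zeta_0})^{\zeta_0'^{-1}\zeta_0\cdots}$ — more cleanly, $\S'$ is obtained from $\S$ by the composite switching function $\zeta:=\zeta_0'\cdot\zeta_0$ (pointwise product in $\{+,-\}$), because composing two switchings is again a switching. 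That gives the ``if'' direction.

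The ``in particular'' clause then follows immediately: $\S$ is balanced $\iff$ $\cC^+(\S)=\cC(\G)=\cC^+(+\G)$ $\iff$ (by the theorem just proved) $\S$ switches to $+\G$; and one direction of this was already observed directly above.

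I expect the main obstacle to be purely organizational rather than mathematical: making the ``agree on a spanning tree, then propagate via fundamental circles'' argument clean, in particular handling the bookkeeping of composing switching functions and confirming that the set of switching functions acts as a group on signatures (so that ``$\S'$ is a switching of $\S$'' is a genuine equivalence relation). One should also be slightly careful that the circle condition $\cC^+(\S)=\cC^+(\S')$ is used only through \emph{fundamental} circles of a single spanning tree — this is legitimate because the fundamental circles generate the cycle space and circle signs are determined by the signature on any spanning tree's cotree once tree edges are switched positive, but it is worth stating explicitly. Alternatively, one can bypass spanning trees entirely and invoke Harary's Balance Theorem (\ref{T:balance}) applied to a suitable auxiliary signed graph encoding the ``difference'' of $\S$ and $\S'$, but the spanning-tree construction is more self-contained and also yields $\zeta$ constructively.
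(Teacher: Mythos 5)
Your argument is correct and is essentially the standard proof of this result: the paper states Theorem \ref{T:switching} without proof (attributing it to \cite{CSG} and, essentially, \cite{Soz}), but the spanning-tree device you use---switch each signature so all tree edges become positive, then read off the sign of each non-tree edge from its fundamental circle---is exactly the technique the paper itself deploys in its ``Algorithmics of balance'' discussion and is the argument behind the cited references. One minor remark: your concern about the fundamental circles generating the cycle space is unnecessary, since once the two tree-switched signatures agree edge-by-edge they are literally the same signed graph, so no further propagation to other circles is needed.
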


	\subsubsection*{Algorithmics of balance}\

How do we use this to determine balance or imbalance of $\S$?  Assume $\S$ is connected, since we can treat each component separately.  Find a spanning tree $T$ and choose a vertex $r$ to be its root.  For each vertex $v$ there is a unique path $T_{rv}$ in $T$ from $r$ to $v$.  Calculate $\zeta(v) = \s(T_{rv})$ (so, for instance, $\zeta(r)=+$) and switch $\S$ by $\zeta$.  In $\S^\zeta$ every tree edge is positive.  Every non-tree edge $e$ belongs to a unique circle $C_e$ in $T \cup e$ and $\s(C_e) = \s^\zeta(C_e) = \s^\zeta(e)$.  If there is an edge $e$ that is negative in $\S^\zeta$, then there is a circle $C_e$ that is negative in $\S$ and $\S$ is unbalanced.  If there is no such edge, then $\{X,Y\}$ with $X = \zeta\inv(+) \subseteq V$ and $Y = \zeta\inv(-)$ is a Harary bipartition of $\S$, confirming that $\S$ is balanced.  

Since $T$ can be found quickly by standard algorithms and it is obviously fast to find $\zeta$, this gives us a quick way of determining whether $\S$ is balanced or not.  This simple algorithm was first published (in different terminology) independently by Hansen \cite{Hansen} and then by Harary and Kabell \cite{HK}.

	\subsubsection*{About circles}\

A \emph{chordless} or \emph{induced circle} is a circle $C$ that is an induced subgraph.  Any extra induced edge besides $C$ itself is considered a \emph{chord} of $C$.  

An unsigned graph has girth, $g(\G) = \min_C |C|$, minimized over all circles $C$.  It also has (though less frequently mentioned) even girth and odd girth, where $C$ varies over circles of even or odd length.  These quantities are naturally signed-graphic.  A signed graph has, besides its girth $g(\S)=g(\G)$, also \emph{positive girth} and \emph{negative girth}, $g_+(\S)$ and $g_-(\S)$, which are the minimum lengths of positive and negative circles; they reduce to even and odd girth when applied to $\S=-\G$.  Girth is not explicit in any of my questions but signed girth may be worth keeping in mind.

	\subsubsection*{Contraction}\

Contracting an edge $e=vw$ with two distinct endpoints (a ``link'') in an ordinary graph means shrinking it to a point, i.e., identifying $v$ and $w$ to a single vertex and then deleting the edge $e$.  In a signed graph $\S$, first $\S$ must be switched so that $e$ is positive.  Then contraction is the same as it is without signs; the remaining edges retain the sign they have after switching.

	\subsubsection*{Balancing edges and vertices}\

A \emph{balancing vertex} is a vertex $v$ of an unbalanced signed graph $\S$ that lies in every negative circle; that is, $\S \setm v$ is balanced.  
A \emph{balancing edge} is an edge $e$ in an unbalanced signed graph such that $\S \setm e$ is balanced; that is, $e$ is in every negative circle.  
An endpoint of a balancing edge is a balancing vertex but there can (easily) be a balancing vertex without there being a balancing edge.

A constructive characterization of balancing vertices is the next proposition.  \emph{Contracting} a negative edge $vw$ that is not a loop means switching $w$ (so $vw$ becomes positive) and then identifying $v$ with $w$ and deleting the edge.

\begin{prop}\label{P:balvert}
Let $\S$ be a signed graph and $v$ a vertex in it.  The following statements about $v$ are equivalent.
  \begin{enumerate}[{\rm (1)}]
  \item $v$ is a balancing vertex.
  	\label{P:balvert:bv}
  \item $\S$ is obtained, up to switching, by adding a negative nonloop edge $vw$ to a signed graph with only positive edges and then contracting $vw$ to a vertex, which is the balancing vertex $v$.
  	\label{P:balvert:contract}
  \item $\S$ can be switched so that all edges are positive except those incident with $v$, and at $v$ there is at least one edge of each sign.
  	\label{P:balvert:vsigns}
  \end{enumerate}
\end{prop}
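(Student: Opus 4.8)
The plan is to prove the three statements equivalent by the cycle $(1)\Rightarrow(3)\Rightarrow(2)\Rightarrow(1)$, with the Switching Theorem (Theorem~\ref{T:switching}) doing the real work; throughout, $\S$ is understood to be unbalanced (this is built into the notion of a balancing vertex), and for simplicity I will assume $\S$ has no loops, the loop cases being minor and separately checkable. For $(1)\Rightarrow(3)$, start from the hypothesis that $\S\setm v$ is balanced. By Theorem~\ref{T:switching} there is a switching function $\zeta$ on $V\setm v$ turning $\S\setm v$ all-positive; extend $\zeta$ arbitrarily to all of $V$. Then in $\S^\zeta$ every edge not incident with $v$ is positive. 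For the other clause of (3): if the edges at $v$ in $\S^\zeta$ were all of one sign, then either $\S^\zeta$, or $\S^\zeta$ switched once more at $v$, would be all-positive, forcing $\S$ to be balanced --- a contradiction. Hence both signs occur at $v$. (This also shows that, under imbalance, ``at least one edge of each sign at $v$'' is automatic once all the other edges are positive.)

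For $(3)\Rightarrow(2)$ the idea is to display $\S^\zeta$ --- hence $\S$, up to switching --- as the contraction of a negative edge, by ``splitting'' $v$. Let $\S_0$ be the signed graph on $(V\setm v)\cup\{v,w\}$, with $w$ a new vertex, in which every edge avoiding the old $v$ keeps its positive sign, every positive edge at the old $v$ is reattached to $v$, and every negative edge $vx$ of $\S^\zeta$ is reattached as a positive edge $wx$; thus $\S_0$ has only positive edges. Adjoin the negative nonloop $vw$ and contract it. Unwinding the definition of contraction, this means: switch $w$ (which turns $vw$ positive and flips precisely the reattached edges $wx$ back to negative), then identify $v$ with $w$ and delete $vw$; the result is exactly $\S^\zeta$, with the identified vertex serving as $v$. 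That is the construction asserted in (2).

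For $(2)\Rightarrow(1)$, run the same contraction analysis forward: if $\S$ switches to $\S_1=(\S_0+vw)/vw$ with $\S_0$ all-positive and $vw$ negative, then after the switch at $w$ built into the contraction every edge of $\S_0+vw$ except those at $w$ is positive, so $\S_1$ --- and hence $\S$ --- switches to a signed graph all of whose edges off $v$ are positive. Then $\S\setm v$ is balanced, and since $\S$ is unbalanced, $v$ meets every negative circle, i.e.\ $v$ is a balancing vertex. The only genuinely fussy point in the whole argument is the sign bookkeeping in $(3)\Rightarrow(2)$ --- one must set up the vertex-split so that the switch at $w$ hard-wired into the definition of contraction exactly cancels it --- and everything else is routine once Theorem~\ref{T:switching} is in hand.
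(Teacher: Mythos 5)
Your proof is correct (granting your two stated conventions: no loops, and $\S$ unbalanced throughout, which is indeed built into the definition of a balancing vertex), but it takes a genuinely different route from the paper. The paper's proof is essentially a citation: the equivalence \eqref{P:balvert:bv}$\Leftrightarrow$\eqref{P:balvert:contract} is simply quoted from \cite{VLI}, and \eqref{P:balvert:contract}$\Leftrightarrow$\eqref{P:balvert:vsigns} is dismissed as immediate because the result of the contraction in \eqref{P:balvert:contract} is exactly the configuration described in \eqref{P:balvert:vsigns}. You instead make the proposition self-contained: your \eqref{P:balvert:bv}$\Rightarrow$\eqref{P:balvert:vsigns} step derives everything from Theorem~\ref{T:switching} (switch $\S\setm v$ all-positive, then rule out ``all edges at $v$ of one sign'' by a further switch at $v$ and the assumed imbalance), and your \eqref{P:balvert:vsigns}$\Rightarrow$\eqref{P:balvert:contract}$\Rightarrow$\eqref{P:balvert:bv} steps are the explicit vertex-splitting bookkeeping that inverts the paper's definition of contracting a negative link --- i.e.\ the part the paper treats as the ``obvious'' direction, written out carefully so that the switch at $w$ hard-wired into contraction cancels your reattachment of the negative edges at $v$. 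What your approach buys is a direct proof from the Switching Theorem that does not depend on \cite{VLI}; what the paper's buys is brevity and a pointer to where the statement (in the more general setting of biased graphs) originates. One small remark: your explicit global assumption of imbalance is not a defect but is actually needed, since \eqref{P:balvert:vsigns} by itself does not force a negative circle (e.g.\ a tree with mixed signs at $v$ satisfies it vacuously); the paper leaves this convention implicit, as it does the loop cases you set aside.
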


\begin{proof}
The equivalence of \eqref{P:balvert:bv} with \eqref{P:balvert:contract} is from \cite{VLI}.
The result of contraction in \eqref{P:balvert:contract} is precisely the description in \eqref{P:balvert:vsigns}.  
\end{proof}

	\subsubsection*{Blocks and necklaces}\
 
A \emph{cutpoint} is a vertex $v$ that has a pair of incident edges such that every path containing those edges passes through $v$.  For instance, a vertex that supports a loop is a cutpoint unless the vertex is only incident with that loop and no other edge.  A graph is called \emph{inseparable} if it is connected and has no cutpoints.  A maximal inseparable subgraph of $\G$ is called a \emph{block of $\G$}; a graph that is inseparable is also called a \emph{block}.  A \emph{block of $\S$} means just a block of $|\S|$.  Blocks are important to signed graphs because every circle lies entirely within a block.

An \emph{unbalanced necklace of balanced blocks} is an unbalanced signed graph constructed from balanced signed blocks $B_{1}, B_{2}, \ldots, B_{k}$ ($k\geq2$) and distinct vertices $v_i, w_i \in B_i$ by identifying $v_i$ with $w_{i-1}$ for $i=2,\ldots,k$ and $v_1$ with $w_k$.  To make the necklace unbalanced, before the last step (identifying $v_1$ and $w_k$) make sure by switching that a path between them in $B_{1}\cup B_{2}\cup \cdots\cup B_{k}$ has negative sign.  (All such paths have the same sign by the second half of Theorem \ref{T:balance}, because the union is balanced before the last identification.)
An unbalanced necklace of balanced blocks is an unbalanced block in which each $v_i$ is a balancing vertex and there are no other balancing vertices.  
If a $B_i$ has only a single edge, that edge is a balancing edge.  In fact, any signed block $\S$ with a nonloop balancing edge $e$ is an unbalanced necklace of balanced blocks:  the balancing edge is one of the $B_i$'s, and the others are the blocks of $\S \setm e$.  
Unbalanced necklaces of balanced blocks are important in signed graphs; for instance, they require special treatment in matroid structure \cite{BG2}.

If we allow $k=1$ in the definition of a necklace we can say that any signed block with a balancing vertex is an unbalanced necklace of balanced blocks.

\subsection{Parity}\label{parity}\

There is a close connection between negative and positive circles in signed graphs on the one hand, and on the other hand odd and even circles in unsigned graphs---that is, parity of unsigned circles.  

First, parity is what one sees when all edges are negative, or (with switching) when the signature is antibalanced.  There is considerable literature on parity problems that can be studied for possible generalization to signed graphs; I mention some of it in the following sections.  The point of view here is that parity problems about circles are a special case of problems about signed circles.  
Some existing work on odd or even circles will generalize easily to negative or positive circles.  For example, the computational difficulty of a signed-graph problem cannot be less than that of the specialization to antibalanced signatures---that is, the corresponding parity problem---and this may imply that the two problems have the same level of difficulty.

	\subsubsection*{Negative subdivision}\label{negsub}\

Second, there is \emph{negative subdivision}, which means replacing a positive edge by a path of two negative edges.  Negatively subdividing every positive edge converts positive circles to even ones and negative circles to odd ones.  Many problems on signed circles have the same answer after negative subdivision.  The point of view here is that those signed-circle problems are a special case of parity-circle problems.  

Negative subdivision most obviously fails when connectivity is involved since the subdivided graph cannot be 3-connected.  Another disadvantage is that contraction of edges makes sense only in signed graphs; a solution that involves contraction should be done in the signed framework.

Denote by $\S^\sim$ the all-negative graph that results from negatively subdividing every positive edge.  Let $\tilde e$ be the path of length 1 (if $\s(e)=-$) or 2 (if $\s(e)=+$) in $\S^\sim$ that corresponds to the edge $e \in E(\S)$, and for a negative $e$ let $v_e$ be the middle vertex of $\tilde e$; thus, $V(\S^\sim)=V(\S) \cup \{v_e : e \in E^+(\S)\}$.  

The essence of negative subdivision is the canonical sign-preserving bijection between the circles of $\S$ and those of $\S^\sim$, induced by mapping $e \in E(\S)$ to $\tilde e$ in $\S^\sim$.   (There is such a bijection for every choice of positive edges to subdivide, even if that is not all positive edges.)

\begin{prop}\label{T:negsub-bal}
A signed graph $\S$ is balanced if and only if $|\S^\sim|$ is bipartite.  
\end{prop}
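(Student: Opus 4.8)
The plan is to exploit the canonical sign-preserving bijection between $\cC(\S)$ and $\cC(\S^\sim)$ that was set up just before the statement, together with the observation (made in the Groundwork section) that an all-negative graph $-\G$ is balanced if and only if $\G$ is bipartite. Since $\S^\sim$ is by construction all-negative, $\S^\sim$ is balanced if and only if $|\S^\sim|$ is bipartite, so it suffices to show that $\S$ is balanced if and only if $\S^\sim$ is balanced.

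For that equivalence I would argue directly from the definition of balance: a signed graph is balanced precisely when every circle in it is positive. The bijection $C \mapsto \tilde C$ (induced by $e \mapsto \tilde e$) is a bijection $\cC(\S) \to \cC(\S^\sim)$ that preserves sign, because each positive edge $e$ is replaced by a path $\tilde e$ of two negative edges, whose sign is $(-)(-) = +$, matching $\s(e) = +$, and each negative edge is left alone. Hence $\S$ has a negative circle if and only if $\S^\sim$ has a negative circle, i.e.\ $\S$ is balanced iff $\S^\sim$ is balanced. Combining with the all-negative characterization of balance gives the result.

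I expect the only point needing any care is making the bijection genuinely a bijection on circles — i.e.\ checking that a circle $C$ of $\S$ really does correspond to a circle (a connected $2$-regular subgraph) of $\S^\sim$ and vice versa, with no stray objects introduced by the new subdivision vertices $v_e$. This is essentially immediate: each $v_e$ has degree $2$ in $\S^\sim$ with both its edges inside $\tilde e$, so $v_e$ lies on a circle of $\S^\sim$ if and only if that circle uses the whole path $\tilde e$; every circle of $\S^\sim$ therefore decomposes uniquely into the paths $\tilde e$ for the edges $e$ of a unique circle of $\S$. Since the excerpt already asserts the existence of this canonical sign-preserving bijection, I would simply invoke it rather than re-derive it, so there is really no substantive obstacle — the proof is a two-line deduction from facts already established.

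Concretely, I would write: ``By the canonical sign-preserving bijection between $\cC(\S)$ and $\cC(\S^\sim)$, $\S$ has a negative circle if and only if $\S^\sim$ does; thus $\S$ is balanced if and only if $\S^\sim$ is balanced. But $\S^\sim$ is an all-negative signed graph, so it is balanced if and only if its underlying graph $|\S^\sim|$ is bipartite.'' That completes the proof.
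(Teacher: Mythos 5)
Your proof is correct and follows essentially the same route as the paper: invoke the canonical sign-preserving circle bijection to transfer balance between $\S$ and $\S^\sim$, then use the fact that the all-negative graph $\S^\sim$ is balanced exactly when $|\S^\sim|$ is bipartite. The extra care you take verifying the bijection is fine but not needed beyond what the paper already asserts.
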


\begin{proof}
It follows from the sign-preserving circle bijection that $\S$ is balanced if and only if $\S^\sim$ is balanced.  Since $\S^\sim$ is all negative, it is balanced if and only if its underlying graph is bipartite.
\end{proof}

\subsection{Weirdness}\label{terms}\

	\subsubsection*{Groups or no group}\
	
Any two-element group will do instead of the sign group.  Some people prefer to use the additive group $\bbZ_2$ of integers modulo $2$, which is the additive group of the two-element field $\bbF_2$.  This is useful when the context favors a vector space over $\bbF_2$.  
	
Another variant notation is to define a signed graph as a pair $(\G,\S)$ where $\S \subseteq E(\G)$; the understanding is that the edges in $\S$ are negative and the others are positive.  I do not use this notation.

	\subsubsection*{Terminologies}\
	
Switching has been called ``re-signing'' and other names.

Stranger terminology exists.  Several otherwise excellent works redefine the words ``even'', ``odd'', and ``bipartite'' to mean positive, negative, and balanced, all of which empty those words of their standard meanings and invite confusion.  I say,  ``That way madness lies'' \cite{Lear}.

\section{Say No to Frustration:  Eliminating Negative Circles}\label{frust}

A main question in signed graph theory is how to make an unbalanced signed graph balanced---that is, how to eliminate all negative circles---by adjusting the graph.  Usually, that means deleting edges or vertices, and in particular deleting the smallest number.  The \emph{frustration index} $l(\S)$ is the smallest number of edges, and the \emph{frustration number} $l_0(\S)$ is the smallest number of vertices, whose deletion results in a balanced signed graph.  When $\S$ is antibalanced, i.e.\ (for practical purposes) an all-negative graph $-\G$, then $l(-\G) = |E| - \text{maxcut}(\G) =$ the complement of the maximum cut size, so the frustration index problem is equivalent to the maximum cut size, which is also the maximum number of edges in a bipartite subgraph.  Also, $l_0(-\G) = |V| - \beta(\G)$ where $\beta(\G)$ denotes the maximum order of a bipartite induced subgraph.  Thus, frustration index and number generalize the problems of largest bipartite subgraphs or induced subgraphs.

In general finding the frustration number or index is hard, and finding the maximum value over all signatures of a fixed graph is also hard (see Akiyama et al.\ \cite{Aki}).  An exception is $K_n$, where we have a formula, not very difficult but not too easy:

\begin{prop}[Petersdorf \cite{Pet}]\label{P:petersdorf}
$\max_\s l(K_n,\s) = l(-K_n) = \lfloor (n-1)^2/4 \rfloor$, and if $(K_n,\s)$ is not antibalanced, then $l(K_n,\s) < l(-K_n)$.
\end{prop}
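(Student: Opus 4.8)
The plan is to rewrite the frustration index as a minimum over switchings, read off $l(-K_n)$, and then prove both the bound and the strictness by induction on $n$. By Theorem~\ref{T:switching}, $\S\setm D$ is balanced exactly when there is a switching function $\zeta$ with $E^-(\S^\zeta)\subseteq D$; taking $D$ as small as possible for each $\zeta$ gives
\[
  l(\S)\ =\ \min_\zeta\, |E^-(\S^\zeta)|,
\]
and in terms of the bipartition $\{X,Y\}=\{\zeta\inv(+),\zeta\inv(-)\}$, the quantity $|E^-(\S^\zeta)|$ counts the negative edges lying inside $X$ or inside $Y$ together with the positive edges between $X$ and $Y$ (the edges ``frustrated'' by $\{X,Y\}$). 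When $\S=-K_n$ there are no positive edges, so a bipartition with parts of sizes $k$ and $n-k$ frustrates $\binom k2+\binom{n-k}2=\binom n2-k(n-k)$ edges; this is least when $k=\lfloor n/2\rfloor$, giving $l(-K_n)=\binom n2-\lfloor n^2/4\rfloor=\lfloor(n-1)^2/4\rfloor=:L_n$. In particular $\max_\s l(K_n,\s)\ge L_n$, so the remaining content is the statement that $l(K_n,\s)\le L_n$ for every signature $\s$, with equality only if $\s$ is antibalanced; call this $(\ast)$.

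I would prove $(\ast)$ by induction on $n$, checking $n\le 3$ by hand. The inductive engine is the inequality
\[
  l(K_n,\s)\ \le\ l(K_n\setm v,\s)+\big\lfloor (n-1)/2\big\rfloor \qquad\text{for any vertex }v.
\]
To see this, take a bipartition $\{P,Q\}$ of $V\setm v$ optimal for $K_n\setm v$ and compare the two extensions to $K_n$: placing $v$ in $P$ frustrates, in addition, the negative $v$--$P$ edges and the positive $v$--$Q$ edges, say $c_1$ of them, while placing $v$ in $Q$ frustrates $c_2$ edges symmetrically, and $c_1+c_2=|P|+|Q|=n-1$, so the cheaper choice adds at most $\lfloor(n-1)/2\rfloor$. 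Since $L_{n-1}+\lfloor(n-1)/2\rfloor=L_n$ (a short parity computation) and $l(K_n\setm v,\s)\le L_{n-1}$ by induction, we get $l(K_n,\s)\le L_n$.

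For the strictness, assume $n\ge 4$ and $l(K_n,\s)=L_n$. Then the two inequalities above force $l(K_n\setm v,\s)=L_{n-1}$ for \emph{every} $v$, so by the induction hypothesis the induced signature on each $K_n\setm v$ is antibalanced. Now a signature of a complete graph is antibalanced if and only if every triangle is negative: applying Harary's Balance Theorem (Theorem~\ref{T:balance}) to $-\s$, antibalance of $\s$ means every circle of $-\s$ is positive, and in a complete graph the sign of a circle of length $\ge 4$ equals the product of the signs of two shorter circles sharing a chord, so it suffices that every triangle of $-\s$ be positive, i.e.\ every triangle of $\s$ be negative. Since $n\ge 4$, each triangle of $K_n$ avoids some vertex $v$ and hence lies inside $K_n\setm v$, where it is negative; therefore all triangles of $(K_n,\s)$ are negative and $\s$ is antibalanced. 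This completes the induction and the proof.

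The step I expect to be the main obstacle is this last one: arranging the chain of inequalities so that $l(K_n,\s)=L_n$ propagates to all vertex-deleted subgraphs and then back to antibalance. It hinges on getting the arithmetic identity $L_{n-1}+\lfloor(n-1)/2\rfloor=L_n$ exactly right in both parities, and on the triangle criterion for antibalance together with the fact that for $n\ge 4$ no triangle meets every vertex (so the base cases $n\le 3$ must be handled separately); by contrast the evaluation of $l(-K_n)$ and the base cases themselves are routine.
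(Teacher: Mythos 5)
Your argument is correct, and it is worth noting that the paper itself offers no proof at all: Proposition~\ref{P:petersdorf} is simply quoted from Petersdorf's paper \cite{Pet}, so there is no ``paper proof'' to match yours against. Your route is a sound, self-contained one. The evaluation $l(-K_n)=\binom{n}{2}-\lfloor n^2/4\rfloor=\lfloor (n-1)^2/4\rfloor$ via Equation~\eqref{E:swfrust} (which the paper states anyway) is exactly right, and your inductive engine $l(K_n,\sigma)\le l(K_n\setminus v,\sigma)+\lfloor (n-1)/2\rfloor$ is correct: the two ways of placing $v$ relative to an optimal Harary-type bipartition of $K_n\setminus v$ frustrate complementary sets of the $n-1$ edges at $v$, and the identity $L_{n-1}+\lfloor(n-1)/2\rfloor=L_n$ checks out in both parities, giving the upper bound $\max_\sigma l(K_n,\sigma)\le \lfloor(n-1)^2/4\rfloor$. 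Your treatment of the equality case is the genuinely nontrivial contribution and it works: equality forces $l(K_n\setminus v,\sigma)=L_{n-1}$ for every $v$, the induction hypothesis makes each vertex-deleted signature antibalanced, for $n\ge 4$ every triangle misses a vertex and is therefore negative, and the chord-decomposition argument (every longer circle in $K_n$ is the product of two shorter ones sharing a chord, so all triangles negative implies antibalanced) closes the loop; the base cases $n\le 3$, including their equality statements, are as trivial as you claim. One stylistic remark: the upper-bound half of your induction is essentially the classical greedy max-cut argument (each vertex can be switched to make at least half of its incident edges positive), so if you wanted a shorter exposition you could present the bound that way and reserve the induction machinery solely for the strictness statement, which is where the real content lies.
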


It is easy to verify the analog for frustration number: 
$$
\max_\s l_0(K_n,\s) = l_0(-K_n) = n-2,
$$ 
and if $(K_n,\s)$ is not antibalanced, then $l_0(K_n,\s) < l_0(-K_n)$.

A good theoretical formula for the frustration index is 
\begin{equation}\label{E:swfrust}
l(\S) = \min_\zeta |E^-(\S^\zeta)|,
\end{equation}
minimized over all switching functions $\zeta$.  For computing $l(\S)$ this is impractical because it requires checking an exponential number of switchings ($2^{|V|-1}$, to be exact).  Hence the need for clever methods.  This matters because frustration index is a main question in algorithmic graph theory (for all-negative $\S$; a key word is ``bipartization'') and a significant one in statistical physics.  Both index and number are NP-hard problems (see, e.g., Barahona \cite{Barahona} and Choi, Nakajima, and Rim \cite{CNR}, respectively) but there is much interest in fast algorithms for finding or approximating them.  

In particular, in the ``$\pm J$ Ising model'' in physics fast computation of $l(\S)$ is necessary for computational analysis of examples (see papers of Vogel et al.\ such as \cite{Chileans}, Hartmann such as \cite{Hartmann}, or many other writers).  Present techniques are not strong enough to analyze large graphs.  Since we will not solve that problem, and since this is where I found the term ``frustration'', I only compare terminology.  A ``lattice'' in physics may be a lattice graph or any graph; a ``site'' is a vertex, a ``bond'' is an edge, a ``ferromagnetic bond'' is a positive edge and an ``antiferromagnetic bond'' is a negative edge.  A ``plaquette'' is, while not precisely defined, a kind of chordless circle such that all plaquettes (usually) generate the binary cycle space of the underlying graph.  A ``state'' is a switching function $\zeta$ and an edge $uv$ is ``satisfied'' or ``frustrated'' according as $\s^\zeta(uv) = +$ or $-$; more simply, according as $\zeta(u)\zeta(v) = \s(uv)$ or $-\s(uv)$.  A circle is ``frustrated'' if its sign is $-$, otherwise ``satisfied''.  An unbalanced signed graph is sometimes also called ``frustrated''.

	\subsubsection*{Frustration and negative circles}\

I have mentioned frustration because there are interesting papers on the connection between the frustration index or number and the existence of disjoint negative (or positive) circles.  For instance, the maximum number of edge-disjoint negative circles in $\S$ is at most $l(\S)$ and the maximum number of vertex-disjoint negative circles is at most $l_0(\S)$.  Berge and Reed proved that, if in $\S=-\G$ the maximum number of edge-disjoint circles equals $l(-\G)$, then $\G$ has chromatic number $\chi(\G)\leq3$ \cite{BRodd}.  One could be forgiven for hoping this is a special case of a signed-graph theorem and setting out to prove that theorem.

I will have more to say about packing problems like this in Section \ref{p-c}.

	\subsubsection*{Negative subdivision vs.\ frustration}\

Frustration is not altered by negative subdivision.

\begin{prop}\label{T:negsub-frust}
Negative subdivision changes neither $l_0(\S)$ nor $l(\S)$.
\end{prop}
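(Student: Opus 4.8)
The plan is to leverage the canonical sign-preserving bijection $C \mapsto \tilde C$ between $\cC(\S)$ and $\cC(\S^\sim)$ recorded before Proposition~\ref{T:negsub-bal}: because each added middle vertex $v_e$ has degree $2$, every circle of $\S^\sim$ traverses each path $\tilde e$ either not at all or completely, so it is the subdivision $\tilde C$ of a unique $C \in \cC(\S)$, and $\s(\tilde C) = \s(C)$. The one bookkeeping fact I would extract at the outset: in $\S^\sim$, deleting a single edge of the length-$2$ path $\tilde e$, or deleting the middle vertex $v_e$, destroys exactly the circles $\tilde C$ with $e \in C$; whereas in $\S$, deleting an edge $e$ (resp.\ a vertex $u$) destroys exactly the circles through $e$ (resp.\ through $u$), and $u$ lies on every circle through an edge at $u$. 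Also, deleting edges or vertices never creates circles, so ``balanced'' need only be checked by confirming that the surviving negative circles have all been hit.

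For the frustration index I would prove the two inequalities separately. To get $l(\S^\sim) \le l(\S)$, take $F \subseteq E(\S)$ with $|F| = l(\S)$ and $\S \setm F$ balanced, and for each $e \in F$ choose one edge of $\tilde e$ (the edge itself if $\s(e)=-$) to form $\tilde F$ with $|\tilde F| = |F|$. Every negative circle of $\S^\sim$ is $\tilde C$ for some negative $C \in \cC(\S)$; since $C$ meets $F$ in some $e$, the circle $\tilde C$ runs through the whole path $\tilde e$ and hence meets $\tilde F$, so $\S^\sim \setm \tilde F$ is balanced. For the reverse inequality $l(\S) \le l(\S^\sim)$, take $\tilde F \subseteq E(\S^\sim)$ realizing $l(\S^\sim)$: minimality forces $\tilde F$ to contain at most one edge of each path $\tilde e$ (two such edges kill only circles through $e$, already killed by one), so $F := \{e \in E(\S) : \tilde F \cap E(\tilde e) \ne \eset\}$ satisfies $|F| = |\tilde F|$, and the same circle-bijection argument shows $\S \setm F$ is balanced. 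Combining gives $l(\S) = l(\S^\sim)$.

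The argument for the frustration number is parallel, with vertices replacing edges. Since $V(\S) \subseteq V(\S^\sim)$ and $V(C) \subseteq V(\tilde C)$, a vertex set balancing $\S$ also balances $\S^\sim$, which yields $l_0(\S^\sim) \le l_0(\S)$ immediately. For $l_0(\S) \le l_0(\S^\sim)$, take a minimum balancing set $\tilde S$ of $\S^\sim$ and replace each middle vertex $v_e \in \tilde S$ by an endpoint of $e$; this does not increase cardinality (collisions only help), and by the bookkeeping fact the resulting $S \subseteq V(\S)$ destroys at least the circles $\tilde S$ did, so $\S \setm S$ is balanced. Hence $l_0(\S) = l_0(\S^\sim)$.

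The only step needing genuine care — the rest being immediate from the sign-preserving bijection — is the normalization that transports a minimum deletion set in $\S^\sim$ down to one in $\S$ of no larger size: ``at most one edge per $\tilde e$'' for the index, and ``replace $v_e$ by an endpoint of $e$'' for the number. Once these are in hand, the two halves of each equality fall out at once.
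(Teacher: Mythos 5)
Your proposal is correct and follows essentially the same route as the paper's proof: the sign-preserving circle bijection, the transfer of a minimum edge (resp.\ vertex) deletion set from $\S$ to $\S^\sim$ by picking one edge of each $\tilde e$ (resp.\ using $V(\S)\subseteq V(\S^\sim)$), and the reverse normalization of ``at most one edge per $\tilde e$'' and ``replace $v_e$ by an endpoint of $e$.'' No gaps; the extra bookkeeping you spell out (degree-$2$ middle vertices forcing circles to traverse $\tilde e$ whole) is exactly what the paper leaves implicit.
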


\begin{proof}
Suppose $F$ is a set of $l(\S)$ edges such that $\S \setm F$ is balanced.  In $\S^\sim$ construct $F^\sim$ by taking each negative edge $e\in F$ and one edge in $\tilde e$ for each positive edge in $F$.  Since $\S \setm F$ is balanced, $\S^\sim \setm F^\sim$ is also balanced; thus, $l(\S^\sim)\leq|F|=l(\S)$.  
Conversely, suppose $G$ is a set of $l(\S^\sim)$ edges in $\S^\sim$ such that $\S^\sim\setm G$ is balanced.  
If $G$ contains one of the edges of a 2-path $\tilde e$ resulting from subdividing a positive edge $e$, it does not contain the other, since that would not eliminate any more negative circles.  Therefore the set $G' = \{e \in E(\S): \tilde e \in G\}$ has cardinality $|G|$.  As $\S^\sim\setm G$ is balanced, so is $\S\setm G'$; thus, $l(\S) \leq |G'| = l(\S^\sim)$.  This proves equality for the frustration index.

Suppose now that $X$ is a set of $l_0(\S)$ vertices such that $\S \setm X$ is balanced.  Then $\S^\sim\setm X$ can have no negative circles, so $l_0(\S^\sim) \leq |X| = l_0(\S)$.  Conversely, suppose $Y$ is a set of $l_0(\S^\sim)$ vertices such that $\S^\sim \setm Y$ is balanced.  If $Y$ contains a vertex $v_e$ where $e$ is a positive edge in $\S$, replace it by an endpoint of $e$ (which is a vertex of $\S^\sim$).  That gives a vertex set $Y' \subseteq V(\S)$ such that $|Y'| \leq |Y|$ and $\S \setm Y'$ is balanced, so $l_0(\S) \leq |Y'| \leq l_0(\S^\sim)$.  That proves equality for the frustration number.
\end{proof}

This implies that the frustration index or number of a signed graph can be computed by any algorithm that computes the \emph{bipartization index} or \emph{number} of an unsigned graph, which is the minimum number of edges or vertices, respectively, whose deletion makes an unsigned graph bipartite.  As bipartization is the all-negative case of frustration (and negative subdivision can obviously be computed in linear time), the two problems are equivalent in computational difficulty.  (The considerable effort that has been devoted by some physicists to speeding up computation of $l(\S)$ \cite{Hartmann,Chileans} is therefore equivalent to speeding up the calculation of bipartization index.  Physicists are usually more interested in particular kinds of graphs, such as lattice graphs, for which there may be special methods of computation that do not permit subdividing edges.)


\section{Edges and Vertices in Negative (and Positive) Circles and Holes}\label{ev}

\begin{enumerate}[\Q $1^-$.]
\item  In $\S$, is a certain edge $e$ in a negative circle?
\\\A  It depends on the block containing $e$; see Theorem \ref{T:e-posneg}(a).  (Easy.)
\label{NC}
\item  In $\S$, is a certain edge $e$ in a unique negative circle?
\\\A  It depends on the Tutte 3-decomposition of $\G$ into 2-connected subgraphs, and the details of how those subgraphs are signed.  (Medium hard; solved by Behr \cite{Behr}; see Section \ref{S:e-unique}.)
\label{UNC}
\item  In $\S$, find all edges $e$ such that $e$ belongs to a unique negative circle.
\\\A  This is essentially the same as Question \ref{UNC}.
\item  In $\S$, is a certain edge $e$ in a negative chordless circle?
\\\A  Unknown.  There is a recent algorithm by Marinelli and Parente \cite[especially Section 4.2]{MarPar} but no study of optimality and no answer in terms of graph structure.
\label{NH}
\item  In $\S$, is a certain edge $e$ in a unique negative chordless circle?
\\\A  Unknown.
\label{UNH}
\item  In $\S$, find all edges $e$ such that $e$ belongs to a unique negative chordless circle.
\\\A  This is essentially another version of Question \ref{UNH}.
\end{enumerate}

The analogous questions for {positive circles} are Questions $n^+$.  

The analogous questions for a vertex are easily answered from the answers for edges, because a vertex belongs to a circle if and only if some incident edge belongs to that circle.

All these questions are reducible by negative subdivision to parity questions in unsigned graphs.  However, it may well be easier to go the other way: answer them for signed graphs, then specialize to antibalance to get parity corollaries.

\subsection{An edge (or vertex) in a circle of specified sign}\label{S:e-posneg}\

Whether an edge $e$ is in a negative circle, or a positive circle, clearly depends only on the block that contains $e$.  The answer is easy to prove and nicely illustrates the use of Menger's Theorem.  (I do not say these results are new, though I do not know a source for \eqref{e-posneg:pos}.)  Curiously, the conditions for an edge to be in a positive circle are the more complicated.

\begin{thm}\label{T:e-posneg}
Let $\S$ be a signed graph with an edge $e$.
\begin{enumerate}[{\rm(1)}]
\item There is a negative circle that contains $e$ if and only if the block containing $e$ is unbalanced (Harary \cite{LB}).%
\label{e-posneg:neg}
\item There is a positive circle that contains $e$ if and only if $e$ is not an isthmus and either the block containing $e$ is balanced or it is unbalanced and $e$ is not a balancing edge of the block.
\label{e-posneg:pos}
\end{enumerate}
\end{thm}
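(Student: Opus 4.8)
The plan is to reduce both parts to statements about a single block and then apply Menger's Theorem in the form of two internally disjoint paths. Since every circle through $e$ lies in the block $B$ containing $e$, I may assume $\S = B$ is inseparable. Write $e = vw$. If $e$ is an isthmus then $e$ lies in no circle at all, so in part \eqref{e-posneg:pos} I may assume $e$ is not an isthmus; then in the inseparable graph $B \setm e$ the endpoints $v$ and $w$ are joined by a path, hence (adding $e$) $e$ lies in some circle. For part \eqref{e-posneg:neg}: if $B$ is balanced then every circle through $e$ is positive, so there is no negative one. Conversely, if $B$ is unbalanced, switch (using the algorithmics of balance) so that $e$ is positive; I want a negative circle through $e$, equivalently (after switching $e$ to positive) a $v$--$w$ path of positive sign other than $e$ together with... no — more directly: a negative circle through $e$ corresponds to a $v$--$w$ path $P$ in $B \setm e$ with $\s(P) = \s(e)$ reversed, i.e.\ $\s(P)\s(e) = -$. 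So I must show that in an unbalanced inseparable $B$, for every edge $e=vw$ there is a $v$--$w$ path whose sign differs from that of $e$. By the second half of Theorem \ref{T:balance}, since $B$ is unbalanced there exist \emph{some} two vertices joined by two paths of opposite sign; the work is to transport that imbalance to a path between $v$ and $w$ avoiding $e$.

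To carry that out I would use Menger: in the 2-connected graph $B$ (if $B$ is just the single edge $e$ it is balanced, so assume $|E(B)|\ge 2$, whence $B$ is 2-connected), there are two internally disjoint $v$--$w$ paths, and one of them can be taken to be the single edge $e$. More usefully, pick a negative circle $C$ somewhere in $B$; by 2-connectedness (fan version of Menger) there are two vertex-disjoint paths from $\{v,w\}$ to $C$, giving a theta-subgraph or a larger configuration in which $e$, parts of $C$, and the connecting paths combine. Between $v$ and $w$ inside this configuration there are then \emph{two} internally disjoint paths $P_1, P_2$ avoiding $e$, and $\s(P_1)\ne\s(P_2)$ must hold for at least the pair built to route through $C$ — because if every $v$--$w$ path not using $e$ had the same sign, then together with $e$ one could show the whole configuration, and ultimately $B$, is balanced, contradicting the choice of $C$. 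Whichever of $P_1, P_2$ has sign $-\s(e)$ closes up with $e$ to a negative circle. This also proves the positive direction: the other of $P_1, P_2$, or else a path of sign $+\s(e)$ found the same way, yields a positive circle through $e$.

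For part \eqref{e-posneg:pos} the extra subtlety is precisely the balancing-edge exception. If $B$ is balanced, switch to all-positive and any circle through the non-isthmus edge $e$ is positive — done. If $B$ is unbalanced and $e$ is a balancing edge, then $B\setm e$ is balanced, so (switching $B\setm e$ to all-positive, which forces $e$ to be negative) every $v$--$w$ path avoiding $e$ is positive and every circle through $e$ is negative — so there is no positive circle through $e$. The remaining case: $B$ unbalanced, $e$ not a balancing edge, $e$ not an isthmus. Then $B\setm e$ is unbalanced, so it contains a negative circle $C$; apply the fan form of Menger in the 2-connected graph $B\setm e$ to get two vertex-disjoint paths from $\{v,w\}$ to $C$, producing inside $B\setm e$ two internally disjoint $v$--$w$ paths of opposite signs (the opposite-sign conclusion again coming from the presence of the negative circle $C$ — if both had the same sign one would contradict $C$ being negative after a short balance argument). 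One of these two paths has the sign of $e$; adjoining $e$ gives a positive circle. The main obstacle is the careful Menger/parity bookkeeping in these two mixed cases — specifically, arguing rigorously that the configuration routed through a negative circle really does supply two $v$--$w$ paths of \emph{opposite} sign avoiding $e$ — and isolating exactly why the balancing-edge case is the sole obstruction to a positive circle; the rest is routine.
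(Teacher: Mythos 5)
Your proposal is correct and takes essentially the same route as the paper: reduce to a single block, use Menger's theorem to join $\{v,w\}$ to a negative circle $C$ avoiding $e$, and extract from the resulting theta configuration two $v$--$w$ paths of opposite sign (since the two arcs of $C$ multiply to $\s(C)=-$), with the balancing-edge exception handled exactly as in the paper by switching $B\setm e$ all positive so that every circle through the now-negative $e$ is negative. One small repair: $B\setm e$ need not be $2$-connected, so apply the two-disjoint-paths form of Menger in $B$ itself --- the two vertex-disjoint paths from $v$ and from $w$ to $C$ automatically avoid $e$, since neither path can contain the other endpoint of $e$ (and note also that a block consisting of a single negative loop is unbalanced, a trivial case worth excepting from your remark that a one-edge block is balanced).
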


\begin{proof}
We may assume $\S$ is a block.

If $\S$ is balanced, there are no negative circles, and if $\S \neq K_2$ then every edge is in a circle and every circle is positive.  That solves the balanced case.

Assume $\S$ is unbalanced so there is a negative circle $C$.  Suppose $C$ can be chosen so it does not contain $e$.  Then $e$ is not a loop so it has distinct endpoints $u, v$.  By Menger's Theorem (see, e.g., \cite[Theorem 3.3.1]{Diestel} for the right version of that multifaceted theorem)
there exist disjoint paths $P$ from $u$ to $C$ and $Q$ from $v$ to $C$ that are internally disjoint from $C$.  The union $C \cup \{e\} \cup P \cup Q$ is a theta graph whose two circles containing $e$ have opposite signs.  Thus, $e$ is in both a positive and a negative circle.

If $C$ cannot be chosen so it does not contain $e$, then $e$ is in every negative circle so it is a balancing edge of $\S$.  Clearly, $e$ is then in some negative circle.  On the other hand, $\S \setm e$ is balanced so $\S$ can be switched to make $E \setm e$ all positive; then $e$ is negative, so it is clear that every circle containing $e$ is negative.
\end{proof}

\begin{cor}\label{C:v-posneg}
Let $\S$ be a signed graph with a vertex $v$.
\begin{enumerate}[{\rm(1)}]
\item There is a negative circle that contains $v$ if and only if $v$ belongs to an unbalanced block.
\item There is a positive circle that contains $v$ if and only if $v$ belongs to a balanced block that is not $K_1$ or $K_2$ or it belongs to an unbalanced block in which it is not a balancing vertex of degree $2$.
\end{enumerate}
\end{cor}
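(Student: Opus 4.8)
The plan is to derive the corollary from Theorem~\ref{T:e-posneg} using the elementary remark that a vertex $v$ lies on a circle $C$ if and only if some edge incident with $v$ lies on $C$; hence $v$ lies on a negative, respectively positive, circle exactly when some edge at $v$ does. I also use two easy block facts: every edge at $v$ lies in a block containing $v$, and every block containing $v$ that has at least one edge has an edge at $v$ (a block being connected). Part~(1) is then immediate: by the negative-circle half of Theorem~\ref{T:e-posneg}, an edge lies on a negative circle iff its block is unbalanced, and an unbalanced block has edges, so ``some edge at $v$ lies on a negative circle'' is equivalent to ``$v$ lies in an unbalanced block.''

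For Part~(2) I would prove the forward implication and then split the converse according to whether the block in play is balanced or unbalanced. If $v$ lies on a positive circle $C$, let $B$ be the block containing $C$; then $B\neq K_1,K_2$ since $B$ has a circle. If $B$ is balanced we are in the first case of the statement. If $B$ is unbalanced, I claim $v$ cannot be a balancing vertex of $B$ of degree~$2$: if it were, then $B\setm v$ is balanced, so switch $B$ so that $B\setm v$ is all positive; now either the two edges $e_1,e_2$ of $B$ at $v$ are distinct and every circle through $v$ in $B$ has the form $e_1\cup P\cup e_2$ with $P$ a positive path of $B\setm v$, so its sign equals $\s(e_1)\s(e_2)$ --- a value forced to be $-$, since $B$ has a negative circle and every negative circle of $B$ meets $v$ --- contradicting $\s(C)=+$; or the ``two edges'' at $v$ form a single loop, whence $B$ is just that loop, which must be negative, and $C$ would be negative. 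Conversely, if $v$ lies in a balanced block $B\neq K_1,K_2$, pick any edge $e$ at $v$ in $B$; since the only block in which an edge is an isthmus is $K_2$, $e$ is not an isthmus, so the positive-circle half of Theorem~\ref{T:e-posneg} puts $e$, hence $v$, on a positive circle.

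The remaining, and principal, case is: $v$ lies in an unbalanced block $B$ but is not a balancing vertex of $B$ of degree~$2$, and we must produce a positive circle through $v$. If $v$ is not a balancing vertex of $B$ at all, then $B\setm v$ is unbalanced, hence contains a negative circle $D$ with $v\notin D$; as $B$ is a block, the ``fan'' form of Menger's theorem gives two paths from $v$ to $D$, otherwise disjoint and ending at distinct vertices of $D$, and these together with the two arcs into which those vertices split $D$ make two circles through $v$ whose signs have product $\s(D)=-$, so one is positive. If $v$ is a balancing vertex of $B$, then $\deg_B(v)\geq3$ by hypothesis; switching $B$ so that $B\setm v$ is all positive and reasoning as above, a negative circle of $B$ (necessarily through $v$) exhibits edges of both signs at $v$, so by pigeonhole two of the $\geq3$ edges at $v$ have the same sign, and joining their far endpoints by a path in the connected graph $B\setm v$ yields a positive circle through $v$. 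I expect this two-case argument --- and in particular the bookkeeping for degenerate blocks (loops at $v$, parallel edges, a one-vertex $B\setm v$) together with the precise statement of Menger's theorem for possibly non-simple blocks --- to be the step needing the most care. A cleaner-looking alternative here is to invoke the necklace structure from Section~\ref{ground}: a block with a nonloop balancing edge is an unbalanced necklace of balanced blocks whose balancing edges are exactly its single-edge members, so unless every edge at $v$ is a balancing edge of $B$ one may pick a non-balancing edge at $v$ and apply the positive-circle half of Theorem~\ref{T:e-posneg} directly --- but the Menger argument is self-contained and parallels the proof of that theorem.
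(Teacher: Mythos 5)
Your proposal is correct, and parts of it (Part~(1), the balanced case of Part~(2), the degree-2 forward implication) follow exactly the reduction the paper intends: the corollary is meant to fall out of Theorem~\ref{T:e-posneg} via the remark that $v$ lies on a circle iff some edge at $v$ does. Where you diverge is the principal case, $v$ in an unbalanced block $B$ and not a balancing vertex of degree $2$: the paper's one-line route is to observe that, in an unbalanced block, ``every edge at $v$ is a balancing edge of $B$'' is equivalent to ``$v$ is a balancing vertex of $B$ of degree $2$'' (a negative circle through $v$ uses only two edges at $v$, so with $\deg_B(v)\geq3$ some edge at $v$ misses it and is not balancing; conversely with degree $2$ both edges lie on every negative circle), after which Theorem~\ref{T:e-posneg}(2) applied to a non-balancing edge at $v$ finishes immediately --- essentially the ``cleaner-looking alternative'' you sketch at the end. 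Instead you re-prove the needed facts at the vertex level: the Menger fan plus theta-graph sign argument when $v$ is not balancing (re-running the argument inside the proof of Theorem~\ref{T:e-posneg}), and the switching-plus-pigeonhole argument when $v$ is balancing of degree $\geq3$ (in effect re-deriving Proposition~\ref{P:balvert}(3)). Both routes are sound; yours is self-contained and makes the degenerate cases (loop blocks, parallel edges, one-vertex $B\setm v$) explicit, at the cost of duplicating work the theorem and Proposition~\ref{P:balvert} already encapsulate, while the paper's reduction is shorter but leaves those checks implicit.
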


\subsection{An edge in a unique negative (or positive) circle}\label{S:e-unique}\

Behr's solution to Question $\ref{UNC}^-$ illustrates the role of structural graph theory, in particular the structure of 2-separations, in solving signed circle problems.  Clearly, it is enough to answer the question for a signed block.  

In a graph $\G$ with a subgraph $\Delta$, consider a maximal subgraph $B$ such that every vertex of $B$ is connected to every other by a path that is internally disjoint from $\Delta$.  We call $B$ a \emph{bridge of $\Delta$} (cf.\ Tutte's textbook \cite{Tbook}) and the vertices in $V(B) \cap V(\Delta)$ the \emph{vertices of attachment} of $B$.  Bridges are fundamental in structural graph theory; bridges of a circle are essential to questions about negative or positive circles in signed graphs.  

Suppose $\G$ is a block and $\Delta$ is a circle $C$ (and not a loop); then $B$ has at least two vertices of attachment.  If it has only two, we call it a \emph{path bridge} (but we do not require $B$ itself to be a path).  
If two path bridges $B_1$ and $B_2$ have attachment vertex pairs that separate each other along $C$ (that means $B_1$ is attached at $v_1, w_1$ and $B_2$ at $v_2, w_2$ and these vertices appear along $C$ in the order $v_1v_2w_1w_2$, no two being equal), we say $B_1$ and $B_2$ are \emph{crossing bridges}.  
If $B$ is a bridge of $C$ with attachment vertices $v,w$ such that one of the two segments of $C$ connecting $v$ and $w$ contains no other vertices of attachment, we call that segment of $C$ a \emph{handle} of $C$.  

\begin{thm}[from \cite{Behr}]\label{T:behr}
Let $\S$ be a signed block.  An edge $e$ is contained in a unique negative circle if and only if either $\S$ itself is a negative circle and $e$ is any edge, or $\S$ properly contains a negative circle $C$ such that the bridges of $C$ are non-crossing path bridges, $C$ has exactly two handles, $\S$ has a balancing edge that belongs to one handle of $C$, and $e$ belongs to the opposite handle.
\end{thm}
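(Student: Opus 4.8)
The plan is to reduce to a signed block and analyze the structure of negative circles through it using bridges of a fixed negative circle $C$. First I would dispose of the trivial direction: if $\S$ is itself a negative circle, every edge lies in $C$ and no other circle exists, so uniqueness is immediate; and in the second case, if $e$ lies in the handle opposite a balancing edge $e_0$, then any negative circle must contain $e_0$ (since $\S \setm e_0$ is balanced), and I would argue that the only way to route a circle through both $e_0$ and $e$ is to traverse both handles, i.e.\ to be exactly $C$. This uses that the bridges of $C$ are non-crossing path bridges with exactly two handles, so the ``detours'' available off of $C$ are limited: replacing a segment of $C$ between two attachment vertices of a path bridge $B$ by a path through $B$ changes the circle but, because $B$ is a path bridge between two points on the same handle (handles being the maximal attachment-free segments), such a substitution either stays within one handle or crosses an attachment vertex, and I would show that every such substitution flips the sign relative to $C$ — forcing any other negative circle to avoid $e_0$ or $e$, a contradiction.

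For the forward (harder) direction I would assume $e$ is in a unique negative circle $C$ and extract the claimed structure. If $\S = C$ we are done, so suppose $\S$ properly contains $C$; then $C$ has at least one nontrivial bridge. The key sub-steps: (i) \emph{no crossing bridges} — if $B_1, B_2$ crossed along $C$, I would build a theta- or prism-like configuration producing a second negative circle through $e$ (using that one can reroute through $B_1$ and/or $B_2$ to toggle signs independently, so some combination gives a negative circle $\neq C$); (ii) \emph{every bridge is a path bridge} — a bridge with three or more attachment vertices again gives enough rerouting freedom (via Menger-type internally disjoint paths inside the bridge, as in the proof of Theorem~\ref{T:e-posneg}) to produce an alternative negative circle through $e$; (iii) \emph{exactly two handles} — from non-crossing path bridges one gets a linear/cyclic arrangement of attachment vertices along $C$, and I would show that having three or more handles lets you detour around one handle not containing $e$ while preserving negativity, again violating uniqueness, whereas having only one handle forces $\S$ to have a 2-separation that splits $C$ in a way incompatible with $e$ being in a \emph{unique} negative circle. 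Finally (iv) \emph{balancing edge in the opposite handle}: since $\S$ properly contains $C$ it is unbalanced; I would argue the uniqueness of the negative circle through $e$ forces $\S \setm e$ to be balanced only ``relative to'' one handle, and locate a balancing edge $e_0$ — it must lie in the handle not containing $e$, because a balancing edge in the same handle as $e$, or in a bridge, would either put $e$ in no negative circle or in more than one.

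The main obstacle I expect is step (iii) and the interaction between (iii) and (iv): translating the combinatorial picture of non-crossing path bridges on a circle into a clean statement that ``the number of handles is exactly two'' requires carefully tracking how attachment vertices partition $C$ and ruling out degenerate cases (bridges sharing attachment vertices, nested path bridges, a bridge whose two attachment vertices are the two endpoints of a single handle, etc.). I would handle this by setting up notation for the cyclic sequence of attachment vertices along $C$ and the ``segments'' they cut $C$ into, classifying each segment as a handle or not, and then doing a careful case analysis of rerouting moves: each path bridge gives a sign-toggle move on the pair of segments between its attachments, and the hypothesis that exactly one negative circle passes through $e$ severely constrains which toggles are available. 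The theta-graph trick from the proof of Theorem~\ref{T:e-posneg} — that the two circles of a theta subgraph through a fixed edge have opposite signs — is the recurring engine throughout, applied to $C$ together with one bridge at a time; the bookkeeping of how several bridges combine is where the real work lies, and I would lean on the non-crossing and path-bridge conclusions from (i)–(ii) to keep that bookkeeping tractable. Since the statement is quoted ``from \cite{Behr}'', I would present this as a sketch and refer to Behr's paper for the full verification of the delicate separation arguments in (iii).
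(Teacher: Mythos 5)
The survey itself gives no proof of Theorem \ref{T:behr}: the statement is imported from \cite{Behr}, and the only methodological hint in the text is that ``the proof depends on showing that $e$ belongs to more than one negative circle if the bridge conditions are not satisfied.'' Your forward direction (steps (i)--(iv)) is exactly that contrapositive programme, so in spirit you are on Behr's track; but what you have written is a plan rather than an argument, and its one concretely stated mechanism in the sufficiency direction is wrong. You claim that replacing an arc of $C$ by a path through a path bridge ``flips the sign relative to $C$.'' There is no reason for such a substitution to change the sign, and sign-flipping is not why uniqueness holds. The correct engine is the balancing edge: since $\S$ has a balancing edge $e_0$, switch so that every edge except $e_0$ is positive; then $e_0$ is negative and the negative circles of $\S$ are \emph{precisely} the circles through $e_0$. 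Uniqueness of the negative circle through $e$ thus becomes the unsigned assertion that exactly one circle of $\S$ contains both $e$ and $e_0$. A rerouting through a bridge yields a positive circle exactly when its replaced arc swallows one of the two handles (so the new circle misses $e$ or $e_0$); a rerouting whose replaced arc avoids both handles keeps $e$ and $e_0$ and is therefore negative, which would kill uniqueness. What excludes such reroutings is not a sign computation but the combinatorial hypotheses: a bridge with both attachments interior to one of the two arcs joining the handles either bounds an attachment-free segment (a third handle, excluded) or is crossed by another bridge (excluded).

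A second, related gap: in the sufficiency direction you only consider circles obtained from $C$ by a single-bridge substitution. An arbitrary second negative circle through $e$ must contain both handles in full (their interior vertices have no attachments, hence degree $2$), and then consists of the two handles together with two disjoint paths through the union of $C \setm (\text{handle interiors})$ and possibly \emph{several} bridges; controlling these multi-bridge routes, using non-crossing and the path-bridge property, is the real content, and your sketch explicitly defers it (``the bookkeeping \ldots is where the real work lies'') to \cite{Behr}. Likewise steps (iii) and (iv) of your necessity direction — exactly two handles, and the location of the balancing edge — are stated as goals with no construction of the second negative circle that must exist when they fail. So the proposal matches the announced strategy but does not yet constitute a proof, and the sign-flip claim should be replaced by the switching argument above.
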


Under the conditions of the theorem, the balancing edges of $\S$ are all the edges of the handle that contains a balancing edge.  The proof depends on showing that $e$ belongs to more than one negative circle if the bridge conditions are not satisfied.  
The same proof solves the complementary Question $\ref{UNC}^+$ (see \cite{Behr}).

\subsection{An edge (or vertex) in an induced circle of specified sign}\label{S:e-posneg-ind}\

This problem is harder than the previous ones.  Because it depends on induced circles, and because by subdividing every edge we can make every circle induced, the structural approach, independent of subdivision, that works for circle problems cannot be applied.  
On the other hand, consider a vertex $v$ in a triangle-free signed graph.  By adding suitably signed edges in the neighborhood $N(v)$ we can ensure that all the induced circles containing $v$ will be triangles of either desired sign, regardless of the rest of $\S$.  Similar remarks apply to an edge.

Consider the opposite extreme to subdivision: signed complete graphs $(K_n,\s)$, where every vertex has high degree and every induced circle is a triangle.  It is easy to test whether vertex $v$ belongs to a negative triangle:  first switch to make all edges incident with $v$ positive; then examine $N(v)$ (which in this example is $\S \setm v$) to see if it contains a negative, or positive, edge---that tells whether or not $v$ belongs to a negative, or positive, induced circle.  To test an edge $vw$, compare positive and negative neighborhoods, defined as $N^+(u) = \{ x \in N(u): \s(ux)=+ \}$ and similarly $N^-(u)$.  The edge $vw$ belongs only to positive triangles if and only if $N^+(v) = N^+(w)$ and only to negative triangles if and only if $N^+(v) \cap N^+(w) = \eset.$

Contemplating these examples, I suspect that a good answer to Questions $\ref{NH}^\pm$ will have to be algorithmic (happily, just what was wanted by Marinelli and Parente).  The problem is to find a relatively good algorithm.  The traditional first question is whether there is a polynomial-time algorithm, and even before that, whether the problem belongs to the class NP.  

\begin{prop}\label{P:e-pni-NP}
The question, ``Does a given edge $e$ in a signed graph $\S$ belong to some negative induced circle?'', is in the class NP, and so is the same question for a positive induced circle.
\end{prop}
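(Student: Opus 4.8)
The plan is to exhibit a polynomial-size certificate that can be verified in polynomial time, which is exactly what membership in NP requires. A natural candidate for the certificate is the purported negative induced circle $C$ itself: one lists its vertices and edges. Its size is at most $|V(\S)|+|E(\S)|$, so it is polynomial in the input. To verify the certificate one checks three things: (i) that $C$ is in fact a circle passing through $e$ (walk around it, confirm it is connected, $2$-regular, and contains $e$); (ii) that $\s(C)=-$, which is a single product of at most $|V(\S)|$ signs; and (iii) that $C$ is induced, i.e.\ that $\S$ contains no edge joining two vertices of $C$ other than the edges of $C$ itself. All three checks are clearly polynomial-time, indeed linear or quadratic in the size of $\S$. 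For the positive-induced-circle version the verification is identical except that one checks $\s(C)=+$ in step (ii).

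First I would state precisely what the input is (a signed graph $\S$, encoded in any reasonable way, together with a distinguished edge $e$) and what the ``yes'' instances are (those $(\S,e)$ for which some negative induced circle of $\S$ contains $e$). Then I would define the certificate to be a subgraph $C\subseteq\S$, presented as a list of vertices in cyclic order together with the connecting edges. Next I would describe the verification algorithm as the three-step check above and observe that each step runs in time polynomial in $|\S|$. Finally I would note soundness and completeness: if a valid certificate exists then $(\S,e)$ is genuinely a ``yes'' instance, and conversely if $(\S,e)$ is a ``yes'' instance then any witnessing negative induced circle through $e$ serves as a valid certificate. The same argument with ``negative'' replaced by ``positive'' throughout handles the second assertion.

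Honestly, this proposition is routine and there is no real obstacle; the only thing one must be slightly careful about is the ``induced'' condition in the presence of parallel edges and loops, since the paper explicitly does not assume simplicity. A loop cannot lie on any circle of length $\geq 2$ and is never a chord in the relevant sense, and parallel edges must be handled by noting that if two vertices of $C$ are joined by an edge not in $C$---whether or not it is parallel to an edge of $C$---then $C$ is not induced. So step (iii) should be phrased as: for every pair of vertices $x,y\in V(C)$ and every edge $f$ of $\S$ with endpoints $x$ and $y$, $f$ is one of the edges of $C$. Once this is stated carefully the verification is plainly polynomial, and membership in NP follows. I would remark in passing that this places the problem of Questions $\ref{NH}^\pm$ at worst in NP, leaving open (as the surrounding discussion notes) whether it is in P or is NP-complete.
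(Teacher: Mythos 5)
Your proposal is correct and follows the paper's own argument exactly: the paper's proof likewise exhibits the circle itself as the certificate and notes that checking it is a circle, is chordless, and has the stated sign is fast. Your extra care about parallel edges and loops is a reasonable elaboration but does not change the approach.
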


\begin{proof}
A quickly verifiable certificate that $e$ is in a negative induced circle is the circle.  The verification that it is a circle, has no chords, and is negative (or positive), are all fast.
\end{proof}

I do not know whether these questions are polynomial-time solvable, NP-complete, or in between (given the usual caveat about the unproved difference between P and NP).  By crude analogy with frustration index and frustration number, I expect both are NP-complete.  

Then, there is the opposite question.

\begin{qn}\label{Q:e:xpni-NP}
Is the question, ``Does a given edge $e$ in a signed graph $\S$ \emph{not belong to any} negative induced circle?'', in the class NP?---and the same for a positive induced circle.
\end{qn}

An edge $e$ can belong to a circle of a certain sign but no induced circle of that sign.  A circle with a chord gives easy examples.


\section{The Systems of Negative (and Positive) Circles and Holes}\label{struct}

These are questions about the relationships between circles.

A \emph{hole} is a chordless circle of length at least 4 (usually in a simple graph); triangles are excepted because many questions about graphs are answered by excluding holes, or odd or even holes, but not triangles.

\begin{enumerate}[\Q 1.]
\item  Can a given set of circles in $\G$ be the negative circles of a signature?
\\\A  In every theta subgraph, of the three circles, an even number must be in the set.  (Easy \cite{CSG}.)
\label{Q:balcircset}
\item  Can a set of chordless circles in $\G$ be the negative chordless circles of a signature?
\\\A  There is an infinite set of requirements involving subgraphs of a finite number of types.  (Hard: see Truemper \cite{alphabal}.)
\label{Q:alphabal}
\item  Characterize the signed graphs in which any two negative circles have at least one common vertex.  
\\\A  Solved.  (Hard.)  Slilaty \cite{Sli} completed the proof of this classification, which L\'ov\'asz had initiated; see Section \ref{no2}.
\item  Characterize the signed graphs in which any two negative circles have at least two vertices in common.  I call them \emph{quasibalanced}.
\\\A  Soon to be known \cite{QB}; see Section \ref{S:qbal}.  (Moderately hard.)
\label{Q:quasibal}
\item  Characterize the signed simple graphs with no chordless negative circles.  Or, with none other than triangles; i.e., no negative holes.
\\\A  This is easy for chordal graphs---also known as triangulated graphs, because the definition is that every circle longer than a triangle has a chord.  The answer:  If every triangle in a signed chordal graph is positive, the graph is balanced.  In general the questions are hard.  
\label{Q:oddsignable}
\item  Characterize the signed simple graphs with no chordless positive circles.  Or, with none other than triangles.
\\\A  This is also easy for chordal graphs:  If every triangle in a signed chordal graph is negative, the graph is antibalanced.  Again, the general questions are not at all easy.  
\label{Q:evensignable}
\end{enumerate}

Questions \ref{Q:balcircset}--\ref{Q:quasibal} are not affected by negative subdivision, so they can in principle be solved as parity problems.  However, I think that is the wrong way to approach them because the structures seem more visible in the signed-graph view.

As far as I am aware, research on Questions \ref{Q:oddsignable} and \ref{Q:evensignable} has focussed on signed graphs with only negative triangles and negative holes, in the form of the unsigned graphs that have such a signature (called ``odd-signable''), and on signed graphs with only negative triangles and positive holes, also in the form of their underlying graphs (called ``even-signable'').  Vu\v{s}kovi\'c \cite{Vsurvey} surveys even-signable graphs, their structure, and algorithms, and mentions odd-signable graphs.  The fundamental structure theorems for both kinds are from \cite{CCKV}, with many subsequent papers.

Questions involving chordless circles cannot be treated by negative subdivision.  A negative chord ceases to be a chord if it is subdivided; worse, switching can change which chords are negative.  
I wonder exactly how negative subdivision, signed chords, and switching interact.

\begin{enumerate}[\Q 7.]
\item What properties of $\S$ and $C$ imply that a negative (or positive) circle $C$ with one or more chords does or does not become chordless in $\S^\sim$ after switching $\S$.
\end{enumerate}

\subsection{No two disjoint negative circles}\label{no2}\

Consider a series of intersection properties of negative circles.  First, there are signed graphs with non-intersecting negative circles---most signed graphs.  Then there are those in which any two negative circles intersect.  Slilaty \cite{Sli} proved a characterization, of which the main part is the signed graphs that can be embedded in the real projective plane.

A signed graph \emph{embeds} in that plane if it can be drawn without self-intersections so that the positive circles are contractible but the negative circles are not.  No two negative circles can be disjoint because any two noncontractible curves intersect.  These are the principal examples of signed graphs with no two disjoint negative circles; the other basic example is $-K_5$; and then one can attach an arbitrary balanced graph in certain ways.  See \cite[Theorem 1.2]{Sli}.  

Hochst\"attler et al.\ \cite{HNP} have an algorithm to decide the existence of two disjoint negative circles in polynomial time and to find them if they exist.

\subsection{Quasibalance}\label{S:qbal}\

The next step in the series of intersection properties is quasibalance.  
In the frame and lift matroids of a signed graph \cite{SG, BG1} there are two kinds of matroid circuit: positive circles, and certain subgraphs that contain two negative circles with at most one common vertex.  Quasibalanced signed graphs are those in which the latter type does not occur.  (That is how the question of quasibalance first arose \cite{Sconn}.)  
The next property in the series is that every pair of negative circles has at least three common vertices, but at present I know of no reason to be interested in such graphs.

I will now describe a reduction of Question \ref{Q:quasibal}.  An easy lemma reduces the problem to blocks.

\begin{lem}\label{L:qbal:blocks}
A signed graph is quasibalanced if and only if it has at most one unbalanced block, which is itself quasibalanced.
\end{lem}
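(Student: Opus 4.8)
The plan is to prove both directions by analyzing the block structure, recalling that every circle—hence every negative circle—lies entirely within a single block (as noted in the Groundwork). The key structural fact to exploit is that if two distinct blocks $B$ and $B'$ each contain a negative circle, then those two negative circles share at most one vertex (namely, a common cutpoint, if any), so quasibalance already fails. Thus quasibalance forces all negative circles into a single block.

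First I would prove the ``only if'' direction. Suppose $\S$ is quasibalanced. If $\S$ has two distinct unbalanced blocks $B_1$ and $B_2$, pick a negative circle $C_i$ in $B_i$ (using Theorem~\ref{T:e-posneg}\eqref{e-posneg:neg}, or simply the definition of ``unbalanced block''). Since $C_1 \subseteq B_1$ and $C_2 \subseteq B_2$ and two distinct blocks meet in at most one vertex, $C_1$ and $C_2$ have at most one common vertex, contradicting quasibalance. Hence $\S$ has at most one unbalanced block, say $B$. Every negative circle of $\S$ lies in $B$, so $\S$ is quasibalanced if and only if $B$ is—in particular $B$ is quasibalanced.

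For the ``if'' direction, suppose $\S$ has at most one unbalanced block. If it has none, $\S$ is balanced and hence vacuously quasibalanced (no negative circles at all). If it has exactly one unbalanced block $B$, which is quasibalanced, then every negative circle of $\S$ is a negative circle of $B$, so any two negative circles of $\S$ are two negative circles of $B$ and therefore share at least two vertices by quasibalance of $B$. Hence $\S$ is quasibalanced.

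I do not expect any serious obstacle here; the lemma is genuinely easy, as the paper advertises. The only point requiring a moment's care is the claim that two distinct blocks share at most one vertex—standard block-tree structure theory—and the observation that a negative circle, being 2-connected, cannot straddle two blocks. Everything else is a matter of unwinding the definition of quasibalanced together with the fact that circles are block-confined.
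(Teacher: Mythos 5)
Your argument is correct, and it is exactly the intended one: the paper states this lemma without proof (calling it easy), relying precisely on the facts you use—every circle lies within a single block, and two distinct blocks share at most one vertex—so that all negative circles of a quasibalanced graph are confined to one unbalanced block and quasibalance transfers between $\S$ and that block. No gaps; the loop and balanced (vacuous) cases are handled consistently with the paper's conventions.
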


As a preliminary classification of quasibalanced signed blocks, each falls into exactly one of the following types.
  \begin{enumerate}[{\quad \rm (a)}]
  \item  Balanced.  
  \label{L:qbal:bal}
  \item  Unbalanced, with two (or more) balancing vertices.  
  \label{L:qbal:2bv}
  \item  Unbalanced and quasibalanced but with no balancing vertex.  
  \label{L:qbal:nobv}
  \item  Unbalanced and quasibalanced with only one balancing vertex.  
  \label{L:qbal:1bv}
  \end{enumerate}
It is not obvious that the third and fourth types exist; indeed, the fourth does not.  The third does: a few examples are $K_{4}$ with all edges negative or equivalently (by switching) with a negative 2-edge matching, and $K_{3,3}$ with a negative 2-edge or 3-edge matching.  The second exists and can be described fully.  

\begin{prop}\label{P:necklace}
A signed block has two (or more) balancing vertices if and only if it is an unbalanced necklace of balanced blocks.
\end{prop}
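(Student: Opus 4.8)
The plan is to prove both directions, using the machinery already set up in Section~\ref{ground}, especially Proposition~\ref{P:balvert} (the characterization of balancing vertices), Theorem~\ref{T:balance} (Harary's theorem, and in particular the ``every path has the same sign'' half), and the definition of an unbalanced necklace of balanced blocks. The ``if'' direction is essentially already recorded in the text following that definition: an unbalanced necklace of balanced blocks $B_1,\dots,B_k$ is by construction an unbalanced block in which each identified vertex $v_i$ is a balancing vertex, and since $k\geq 2$ there are at least two such vertices. So the only real content is the ``only if'' direction.

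For the forward direction, suppose $\S$ is a signed block with two distinct balancing vertices $u$ and $v$. First I would use Proposition~\ref{P:balvert}\eqref{P:balvert:vsigns} applied to $u$: switch $\S$ so that every edge not incident with $u$ is positive, with edges of both signs at $u$. Now I bring in the second balancing vertex $v$. Since $v\neq u$, the subgraph $\S\setm v$ is still unbalanced unless... — here is where I must be careful: $\S\setm v$ is balanced because $v$ is a balancing vertex. So every negative circle passes through both $u$ and $v$. In the current switching, every negative edge is incident with $u$, so a negative circle through $v$ uses two edges at $u$, one positive and one negative (as its sign is negative and all other edges on it are positive). The key structural claim is then: deleting $u$ from $\S$ separates the graph into pieces, each of which, together with $u$, forms a balanced block, and $v$ lies in... no — rather, $u$ and $v$ together ``cut'' $\S$ into the blocks of the necklace. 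Concretely, I would argue that $\{u,v\}$ is a 2-separation or a vertex cut whose removal yields exactly the balanced blocks $B_1,\dots,B_k$, glued cyclically along the vertices $u=v_1=w_k$, $v=$ one of the shared vertices, etc. The precise mechanism: consider the blocks of $\S\setm u$; since $\S$ is a block and $u$ is a cutpoint of $\S\setm u$ only in a restricted way, trace how the ``bridges'' at $u$ must be arranged so that every negative circle through $u$ also threads $v$.

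The cleanest route is probably the following. Because $v$ is a balancing vertex, $\S \setm v$ is balanced, so switch so that (say) all of $\S\setm v$ is all-positive except — wait, I can only switch one way at a time, so I would instead argue directly. Let $\zeta$ be the switching from Proposition~\ref{P:balvert}\eqref{P:balvert:vsigns} at $u$. Partition the edges at $u$ into the positive set $E^+_u$ and the negative set $E^-_u$, both nonempty. I claim that in $\S^\zeta \setm u$, the graph falls into components (or, if connected, into a ``path-like'' arrangement of blocks), and that because $v$ meets every negative circle, $v$ must be a cutpoint separating the endpoints of $E^+_u$-edges from those of $E^-_u$-edges in $\S^\zeta\setm u$. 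Walk this back: the blocks of $\S^\zeta\setm u$, strung between $u$ on one side and organized around $v$, are exactly the $B_i$; each is balanced because $\S\setm v$ is balanced and $\S^\zeta$ agrees with a balanced signature off the $u$-edges; and the cyclic gluing at $u$ and $v$ with the required negative sign on a $u$-$v$ path gives precisely an unbalanced necklace of balanced blocks. I would then verify the necklace is genuinely unbalanced (it contains a negative circle through $u$ and $v$) and that $k\geq 2$ (if $k=1$ the block $B_1=\S$ minus... would force $u$ or $v$ not to be balancing, contradiction).

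The main obstacle I anticipate is the bookkeeping in the forward direction: showing that the vertex cut is exactly $\{u,v\}$ and that the pieces hang together in a \emph{cycle} (a necklace) rather than some more complicated tree-of-blocks pattern. This is where inseparability of $\S$ is essential — if the pieces formed a tree rather than a cycle, $u$ or $v$ (or some other vertex) would be a cutpoint of $\S$ itself, contradicting that $\S$ is a block. So the argument is: the 2-separations induced by $\{u,v\}$, combined with $\S$ being 2-connected, force the cyclic structure; and balance of each piece comes from $v$ (and $u$) being balancing vertices. I would likely cite the bridge/2-separation vocabulary from Section~\ref{S:e-unique} to make ``the blocks of $\S\setm u$ are arranged cyclically around $v$'' precise. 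Everything else — verifying signs, checking the edge-case $k=1$ — is routine given Proposition~\ref{P:balvert} and Theorem~\ref{T:balance}.
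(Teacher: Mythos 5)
Your overall strategy for the hard direction is sound, and since the paper states this proposition without proof (the details are deferred to the ``Quasibalanced signed graphs'' paper in preparation), the only question is whether your sketch actually closes.  The solid core is: switch by Proposition~\ref{P:balvert}\eqref{P:balvert:vsigns} so that all negative edges lie at $u$; then every negative circle consists of one positive and one negative edge at $u$ joined by a path in $\S\setm u$, so a path in $\S\setm u$ from a positive neighbour of $u$ to a negative neighbour that avoided $v$ would close up into a negative circle missing $v$.  Hence $v$ separates, inside $\S\setm u$, the positively attached vertices from the negatively attached ones.  You assert this separation but never write down that one-line justification; it is the crux and should be explicit.  It yields $\S=\S_1\cup\S_2$ with $\S_1\cap\S_2=\{u,v\}$, where $\S_1$ is all positive and every $u$-edge of $\S_2$ is negative, so both pieces are balanced.

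The genuine gap is that neither concrete description you offer for the beads is correct.  Removing $\{u,v\}$ does not ``yield exactly the balanced blocks $B_1,\dots,B_k$'', and the blocks of $\S\setm u$ are not ``exactly the $B_i$'' either: for a negative $4$-circle $u\,a\,v\,b\,u$ the first recipe gives two isolated vertices and the second gives only the two edges $av$ and $vb$, whereas the necklace has four beads, namely all four edges.  In general the joints of the necklace are \emph{all} the balancing vertices, not just the two you chose, so the $\{u,v\}$-separation is only a first step.  What is missing is the refinement: take the block decompositions of $\S_1$ and $\S_2$, show (using inseparability of $\S$) that every cutpoint of $\S_i$ separates $u$ from $v$ in $\S_i$ and that $u$ and $v$ are non-cutpoints lying in the two end blocks, so each block tree is a $u$--$v$ path; concatenating the two paths cyclically produces the beads, each balanced because it lies in a balanced $\S_i$, with $k\ge2$ because each side contributes at least one block.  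This is exactly the ``bookkeeping'' you flag as the main obstacle, but it is the real content of the forward direction, not a routine verification, and it is absent rather than merely compressed.  The converse direction is fine to quote from the discussion of necklaces in Section~\ref{ground}, as you do.
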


A complete description of the third type is complicated.  There is a structural approach based on bridges of a negative circle (bridges again!);  one can prove its bridges are balanced.  The description of type \eqref{L:qbal:nobv} then depends on how bridges interact.  
The analysis will appear in \cite{QB}.  

\subsection{Beyond quasibalance}\label{S:negint}\

In general, what is the intersection of all negative circles of a signed graph $\S$?  Apply the negative-subdivision trick to $\S$, yielding a graph $\G$.  Apply the fast Cai--Schieber algorithm \cite{CS} to $\G$ and you have the intersection of all negative circles in $\S$.  QED.


\section{Packing and Covering}\label{p-c}

\emph{Covering} $\S$ by signed circles means finding a set of circles of the right sign such that every vertex, or every edge, is in one (or more) of the circles.  One wants to minimize the number of circles in a cover.  If the circles are edge-disjoint we we call the covering a \emph{decomposition} of $\S$.  
\emph{Packing} signed circles means finding circles of the right sign that are vertex- or edge-disjoint.  One wants to maximize the number of such circles, or minimize the number of vertices or edges that are not covered by their union.  A set of edge-disjoint circles is a decomposition if and only if it is both a packing and a covering.

Packing, covering, and decomposition are natural and popular types of graph-theory problem.  There has been less attention paid them in signed graph theory, perhaps because relatively few graph theorists are yet familiar with signed graphs.

Negative subdivision makes little difference for questions of packing, decomposition, and edge covering, because the circles and the packing and covering properties are not affected by it.  E.g., if $C_1,\ldots,C_k$ cover the edges of $\S$, then $C_1^\sim,\ldots,C_k^\sim$ cover the edges of $\S^\sim$.  Vertex covering is different: if  $C_1,\ldots,C_k$ cover $V(\S)$, $C_1^\sim,\ldots,C_k^\sim$ need not cover all the extra vertices $v_e$ of $\S^\sim$.  Thus, most of the questions in this section can be reduced to odd and even circles in an unsigned graph; but the conjecture and theorem of Huynh et al.\ (Question $10^+$) show that approach may be inadequate.

\subsection{Packing circles}\label{pack}\

Let $p(\G)$ and $p'(\G)$ be the maximum number of vertex- or edge-disjoint circles in a packing of $\G$.  The signed analogs are $p_-(\S)$, $p_+(\S)$, $p_-'(\S)$, and $p_+'(\S)$' the subscript denotes the sign of the circles allowed in the packing.

\begin{enumerate}[\Q $1^-$.]

\item  Given $\S$, what is the value of $p_-(\S)$?
\\\A  Unknown.  It is obvious that $p_-(\S) \leq \min( l_0(\S), p(|\S|) )$.  \emph{Subquestion}.  Which signed graphs have equality?  Equality can occur; to create such a signature on $\G$ find a packing of $k \leq p(\G)$ circles in $\G$ and let $E^-$ consist of one edge from each circle in the packing; then $p_-(\S) = k =  l_0(\S)$.  But these are atypical signatures.

Conforti and Gerards \cite{ConfGer} show that evaluating $p_-(\S)$ is NP-hard, but it can be solved in polynomial time if one excludes from $\S$ four switching classes of signed graphs.  This does not answer my subquestion because both $l_0$ and $p$ are NP-hard.

Geelen and Guenin \cite{GeeGue} study the packing problem in Eulerian graphs (the word ``odd'' in their title means negative circles in signed graphs, not odd circles in ordinary graphs).

There is an explicit lower bound for signed \emph{planar} graphs, the best I know of being $p_-(\S) \geq l_0(\S)/6$, by Kr\'al', Sereni, and Stacho \cite{Kral}.  They say this is probably too weak; $p_-(\S) \geq l_0(\S)/2$ may be generally true and is true for ``highly connected'' antibalanced graphs by Thomassen \cite{Tssen}, the required connectivity being more closely evaluated by Rautenbach and Reed \cite{RautReed}.

Another parity paper is Berge and Reed \cite{BR}, with an important result about the antibalanced case (see my remarks in Section \ref{frust}).  
\label{Pnegnumber}

\item[\Q $\ref{Pnegnumber}^+$.]  The same question for $p_+(\S)$.  
\\\A  Unknown.  This looks harder than Question \ref{Pnegnumber}$^-$, as with positive circles there is no known natural upper bound analogous to $l_0(\S)$.  

There is a lower bound for the all-negative case by Chiba et al.\ \cite{Ch}:  there exist at least $k$ vertex-disjoint positive (i.e., even) circles in $-\G$ if every vertex has degree at least $k$, $n\gg ck^{8k}$ (approximately), and $\G$ is not in a short list of exceptions.  This bound leaves something to be desired, as one would usually expect $p_-(-\G)\gg k$ for such large $n$.
\label{Pposnumber}

\item[\Q $1^-i$.]  Golovach et al.\ \cite{GKPT} raise a curious variant of vertex-disjoint packing: the union of the odd circles should be an induced subgraph.  They prove that for planar graphs such an ``induced packing of $k$ odd circles \ldots\ can be found (if it exists) in time $2^{O(k^{3/2})}n^{2+\epsilon}$ (for any constant $\epsilon>0$)''.  
By the negative subdivision trick, the same holds true for signed as well as unsigned planar graphs, since negative subdivision can at most double $n$.
But deciding the existence of an induced packing of only two odd chordless circles in an arbitrary graph is NP-complete.

\item[\Q $2^\pm$.]  Find a maximum set of pairwise disjoint negative, or positive, circles.
\\\A  Unknown.  This is simply a more demanding version of Question \ref{Pnegnumber}.  An answer should be an efficient algorithm.
\label{Pnegset}

\label{Plast}
\end{enumerate}

And, of course, the same questions for edge-disjoint circles:

\begin{enumerate}[\Q $3^-$.]

\item  That $p_-'(\S) \leq \min( l(\S), p'(|\S|) )$ is obvious.  When is there equality?
\\\A  Unknown.  Examples with $p_-'(\S) = k = l(\S)$ for any $k \leq p'(\G)$ can be created on any graph in the same way as for Question \ref{Pnegnumber}$^-$.  

\emph{Conjecture}.  There is always equality.  I found this to be true for $K_3$, $K_4$, and $K_5$.  Proposition \ref{P:packsmall} is (weak) further support.  
On the other hand, Kr\'al' and Voss's bound for planar graphs \cite{KV} suggests the conjecture may be wrong.
They proved, assuming $|\S|$ is planar, that $p_-'(\S) \geq l(\S)/2$ with cases of equality.  I am not sure what that implies for my conjecture.

\item[\Q $3^+$.]  Evaluate $p_+'(\S)$, given $\S$.
\\\A  Unknown.  As with $p_+(\S)$, there is no known positive analog of the upper bound $l(\S)$ to suggest an answer.

\item[\Q $4^\pm$.]  Find a maximum set of pairwise edge-disjoint negative, or positive, circles.
\\\A  Unknown.\end{enumerate}

Here is a verification of Conjecture $3^-$ for signed complete graphs when the frustration index is small.  Not so incidentally, the packing number of triangles in $K_n$ is known; see Feder and Subi \cite{FedSu}.

\begin{prop}\label{P:packsmall}
If the frustration index of $(K_n,\s)$ satisfies $l(K_n,\s) \leq (n-1)/2$, then $p_-'(K_n,\s) = l(K_n,\s)$.
\end{prop}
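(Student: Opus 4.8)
The plan is to establish the nontrivial inequality $p_-'(K_n,\s) \ge l(K_n,\s)$, since $p_-'(\S) \le l(\S)$ is obvious (Question $3^-$ above). Write $l = l(K_n,\s)$; we may assume $l \ge 1$, as $l=0$ means balanced and then $p_-' = 0 = l$. By the switching formula \eqref{E:swfrust} pick a switching function $\zeta$ attaining the minimum, so that $(K_n,\s)^\zeta$ has exactly $l$ negative edges. Switching preserves the signs of circles, hence also $p_-'$, so it is enough to pack $l$ edge-disjoint negative circles in $\S' := (K_n,\s)^\zeta$. Let $E^-$ be the set of $l$ negative edges of $\S'$, let $F$ be the subgraph $(V,E^-)$ with its isolated vertices removed, and call a vertex \emph{free} if it lies in no negative edge; there are exactly $n - |V(F)|$ free vertices.

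The one inequality that makes everything work is: for any loopless graph $F$ with $l \ge 1$ edges and no isolated vertex, $\chi'(F) + |V(F)| \le 2l+1$, where $\chi'(F)$ denotes the least number of matchings into which $E(F)$ splits. This is elementary. If $F$ has connected components $F_1,\dots,F_c$, each with $l_i \ge 1$ edges and $p_i$ vertices, then connectedness gives $p_i \le l_i+1$, so $|V(F)| = \sum p_i \le l + c$; on the other hand $\chi'(F) = \max_i \chi'(F_i) \le \max_i l_i \le l-(c-1)$, the first inequality because each component's $l_i$ edges may be coloured distinctly, and adding the two estimates yields $2l+1$. Since the hypothesis $l \le (n-1)/2$ says precisely $2l+1 \le n$, we conclude $\chi'(F) \le n - |V(F)|$: the negative edges partition into at most (number of free vertices) matchings.

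Now partition $E^-$ into matchings $M_1,\dots,M_m$ with $m = \chi'(F) \le n - |V(F)|$, and choose $m$ distinct free vertices $x_1,\dots,x_m$. For each negative edge $e = uv$ with $e \in M_t$, form the triangle $T_e := uvx_t$. Because $x_t$ is free, both $ux_t$ and $vx_t$ are positive and $x_t \notin \{u,v\}$, so $T_e$ is a genuine triangle carrying exactly one negative edge; thus $T_e$ is a negative circle. These $l$ triangles are pairwise edge-disjoint: a negative edge can never be a leg of some $T_{e'}$ (legs are positive), so a common edge would be a common leg $wx_t = w'x_{t'}$ with $w \in \{u,v\}$, $w' \in \{u',v'\}$; if $t = t'$ this forces $w = w' \in \{u,v\}\cap\{u',v'\} = \eset$ because $M_t$ is a matching, a contradiction, and if $t \neq t'$ it forces a negative-edge endpoint $w$ to equal a free vertex $x_{t'}$, again impossible. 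Hence $p_-'(K_n,\s) \ge |E^-| = l$, and equality follows.

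I expect the main obstacle — and the place where the bound $l \le (n-1)/2$ is genuinely used — to be the global edge-disjointness, not the construction of the individual circles. One cannot simply award each negative edge a private apex vertex: a minimum-frustration signature of $K_n$ can really have $E^-$ equal to a large star $K_{1,l}$ (all negative edges crowding one vertex) or a perfect matching on $2l$ vertices (many endpoints, few spare vertices). The device coping with both extremes is to let one free vertex serve as the apex for an entire matching of negative edges at once, which converts the problem into whether $\chi'(F)$ free vertices are available — that is, into the inequality $\chi'(F) + |V(F)| \le 2l+1$. Pinning down that inequality, and noting it is tight exactly for stars and matchings (where it meets $2l+1 = n$), is the crux.
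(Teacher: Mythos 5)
Your proof is correct, and while its skeleton is the same as the paper's (switch so that exactly $l$ edges are negative, then pack triangles each containing exactly one negative edge and having its apex at a vertex outside the negative-edge subgraph), the accounting is genuinely different. The paper insists on a \emph{distinct} third vertex for every negative edge; to make the supply of spare vertices suffice it counts them via the cyclomatic number of $E^-$ and uses a chaining trick, letting one edge of each component (except the last) borrow its apex from the vertex set of the next component, which yields the inequality \eqref{E:packsmall}. You instead let a single free vertex serve as apex for a whole matching of negative edges, so the number of apexes needed drops from $l$ to $\chi'(F)$, and the whole question reduces to the clean, self-contained inequality $\chi'(F)+|V(F)|\le 2l+1$ (proved by the trivial bounds $\chi'(F_i)\le l_i$ and $p_i\le l_i+1$ per component), which under $l\le(n-1)/2$ gives exactly enough free vertices; your edge-disjointness check is sound. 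What your route buys: it avoids the component-chaining device, isolates transparently where the hypothesis $2l+1\le n$ enters, and identifies the extremal shapes (stars and matchings) where the inequality is tight. What the paper's route buys: by deliberately wasting apexes it exhibits at least $\xi(E^-)$ unused spare vertices, which feeds directly into the author's subsequent discussion of how to push the bound past $(n-1)/2$; indeed, reusing one apex for several negative edges is precisely one of the improvements suggested there, so your argument can be read as carrying out that suggestion, although with the trivial bound $\chi'(F)\le\max_i l_i$ it does not by itself extend the stated range of $l$.
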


\begin{proof}
For $n\leq4$ this is trivial or obvious.  Consider any other $(K_n,\s)$; assume by switching that the number of negative edges is $l=l(K_n,\s)$.  The negative edge set $E^-$ consists of one or more components, $E_i^-=\{e_{i,1},\ldots,e_{i,l_i}\}$ for $1 \leq i \leq m \leq l$, having $l_i$ edges and $n_i$ vertices with $2 \leq n_i \leq l_i+1$ and equality only if $E_i^-$ is a tree.  
More precisely, $n_i = l_i + 1 - \xi_i$, where $\xi_i$ is the cyclomatic number of $E_i^-$.  (The cyclomatic number is the number of edges not in a maximal forest.)  Note that $\xi(E^-)=\sum_i \xi_i$.

The simple trick is to create a negative triangle containing $e \in E_i^-$ by joining it to a vertex not in $V(E_i^-)$.  The difficulty is to ensure that no positive edge is used twice.  We ensure this by using a different third vertex for every negative edge.  Thus, we have to demonstrate that there are enough vertices available for making negative triangles.

The number of vertices not in negative edges (call them extra vertices) is 
\begin{equation}
\begin{aligned}
n-\sum_i n_i &= n-\sum_i (l_i+1-\xi_i) = n - (l+m-\xi(E^-)) \\
&= (n - 2l) + \sum_i (l_i-1) + \xi \\
&\geq \sum_{i=1}^{m-1} (l_i-1) + l_m.
\end{aligned}
\label{E:packsmall}
\end{equation}
For each $E_i^-$ with $i<m$ we choose one vertex $v_{i,l_i}\in V(E_{i+1}^-)$ and $l_i-1$ extra vertices $v_{i,1},\ldots,v_{i,l_i-1}$, and for $E_m^-$ we choose $l_m$ extra vertices $v_{m,1},\ldots,v_{m,l_m}$, so that no extra vertex is chosen twice; Equation \eqref{E:packsmall} shows there are enough extra vertices to do that.  The triangles on $V(e_{i,j}) \cup \{v_{i,j}\}$ for $j=1,\ldots,l_i$ each have exactly one negative edge, and no two have an edge in common.  Therefore we have a packing of $l$ negative circles, proving that $p_-(K_n,\s)\geq l(K_n,\s)$.  Since $p_-'(K_n,\s) \geq l(K_n,\s)$ is always true, the proof is complete.
\end{proof}

The upper limit $(n-1)/2$ can certainly be raised, probably to around $n$.  I used third vertices for triangles on negative edges very inefficiently.  My proof leaves at least $\xi(E^-)$ unused extra vertices; I could have used two or more vertices in $E_{i+1}^-$ instead of extra vertices (if $m>1$); and especially I could have used the same third vertex more than once.  Besides all that, the negative circles in the packing need not be triangles; for instance, the three negative circles that pack $K_5$ are two triangles and one quadrilateral.  
I present improvement of Proposition \ref{P:packsmall} as an open problem.

\subsection{Covering by circles}\label{cover}\

I cannot recall seeing any papers on covering, not even for the graphic case where one asks for odd or even circles, i.e., where $\S=-\G$ is all negative.

\begin{enumerate}[\Qs 1--2$^\pm$.]

\item[\Qs 5--6$^\pm$.]  Like Questions 1--2$^\pm$ but for the minimum number or minimum sets of negative (or positive) circles that cover all the vertices of $\S$.
\\\A  Unknown.

\item[\Qs 7--8$^\pm$.]  Like Questions 5--6$^\pm$ but for circles that cover the edges of $\S$.
\\\A  Unknown.

\item[\Q 9$^\pm$.]  Are there duality relations between packing and covering numbers?
\\\A  Unknown.  

\label{Clast}
\end{enumerate}

\subsection{Decomposition into circles}\label{decomp}\

These problems are suggested by the theorem that a connected graph decomposes into circles iff it is Eulerian.  (\emph{Decomposing} a graph means partitioning its edge set.)  Questions 10--$11^\pm$ seem very hard but interesting since the antibalanced case $-\G$ is asking for decomposition into odd, or even, circles.
Let $d(\G)$ denote the smallest number of circles into which $\G$ can be decomposed.

\begin{enumerate}[\Q $10^\pm$.]

\item[\Q $10^-$.]  Which $\S$ can be decomposed into negative circles?
\\\A  Unknown.  
\label{Dnegdecomp}

\item[\Q $10^+$.]  Which $\S$ can be decomposed into positive circles?
\\\A  Partially known.  The best current result is due to Huynh, Oum, and Verdian-Rizi \cite{HOV}.  FIrst, their exciting \emph{Conjecture}.  A connected signed graph $\S$ has a decomposition into positive circles if and only if it has even degree at every vertex, it has an even number of positive edges (these are obvious), and it does not have a subgraph that contracts to $-K_5$ (this is the subtle part).   What they prove is sufficiency of the condition with $-K_4$ replacing $-K_5$ and another small restriction.
Earlier, M\'a\v{c}ajov\'a and Maz\'ak \cite{MM} found an infinite family of signed graphs that are 4-regular (so they do have a circle decomposition) but have no such decomposition into positive circles.

In \cite{HKOV} the same authors and King study the property of \emph{strong circle decomposability} of a graph $\G$: every subdivision of $\G$ with an even number of edges decomposes into even circles.  They treat this property through signs on $\G$.
\label{Dposdecomp}

\item[\Q $11^-$.]  If $\S$ can be decomposed into negative circles, what is the smallest number of circles it needs?
\\\A  Unknown.  The answer is clearly $\geq d(|\S|)$ and $\leq l(\S)$, so there can be no negative circle decomposition if $l(\S) < d(|\S|)$.  I found that every signed $K_5$ with $l(K_5,\s) \geq d(K_5) = 2$ (they all have $l(K_5,\s)=2$ or $3$) has a decomposition into $l(K_5,\s)$ but no fewer negative circles.
\label{Dnegdecompnum}

\item[\Q $11^+$.]  If $\S$ can be decomposed into positive circles, what is the smallest number of circles?
\\\A  Unknown.  The number is $\geq d(|\S|)$, but when it may be equal, and how much greater it can be, are unknown.
\label{Dposdecompnum}

\item[\Q $12^\pm$.]  Is there an interesting property of a connected signed graph, similar to existence of an Eulerian tour, related to decomposition into negative or positive circles?
\\\A  Unknown.  Needless to say, this question is open-ended.  
\label{Dposnegdecomp}

Research on decomposing unsigned graphs into even circles (that is, circles of even length) led Huck and Kochol \cite{HK} to broaden the question by introducing an intermediate kind of decomposition and a nice parameter they called ``oddness'' of the graph.  This naturally extends to signed graphs, suggesting this intermediate problem that enlarges the perspective of positive-circle decomposition.  Define the \emph{circle negativity} of $\S$ to be the smallest number of negative circles in any circle decomposition of the underlying graph.

\item[\Q $13^+$]  If $\S$ can be decomposed into circles (that is, if all degrees are even), what is its circle negativity?

There is the obvious complementary question about the circle positivity of $\S$.  For unsigned graphs (that is, all-negative signed graphs) that seems less interesting, but for signed graphs in general, bearing in mind the essentiality of negative circles, it should be interesting to look for the fewest positive circles in a circle decomposition.  Thus, I propose:

\item[\Q $13^-$]  If $\S$ can be decomposed into circles (that is, if all degrees are even), what is its circle negativity?

Perhaps the last two questions are the most interesting ones!

\label{Dlast}
\end{enumerate}


\section{Structural Circle Questions}\label{neg}

An assortment suggested partly by existing graph theorems and the popularity of Hamiltonian questions.

\begin{enumerate}[\Q 1.]

\item  Assume $\S$ has a Hamiltonian circle and is unbalanced.  Is there a negative Hamiltonian circle?  A positive one?
\\\A  Unknown.  

\emph{Conjecture}.  Most $\S$ with a Hamiltonian circle have both signs.  The exceptions are the balanced signed graphs and the antibalanced signed graphs of even order, which can have only positive Hamiltonian circles, as well as the antibalanced signed graphs of odd order and the unbalanced necklaces of balanced blocks, which can have only negative Hamiltonian circles.

Popescu \cite{Pop1a} proved that if $(K_n,\s)$ is neither balanced nor antibalanced, then it has both positive and negative circles of all lengths.  In particular it has both positive and negative Hamiltonian circles, but Popescu's result suggests a bigger question:

\item  For which graphs $\G$ is it true that every signature $\s$ has both positive and negative circles of every length that occurs in $\G$?
\\\A  I know of nothing other than Popescu's theorem.

\item  Is there a positive, or negative, circle $C$ such that $\S \setminus E(C)$ is disconnected, or separable, or 2-separable, or 2-connected?
\\\A  Conlon \cite{Conlon} proved that if $\G$ is 3-connected, there is an even circle $C$ such that $\G \setminus E(C)$ is 2-connected.  
Fujita and Kawarabayashi \cite{FuKawara} have a similar theorem for $\G \setminus V(C)$.
Do these generalize to signed graphs, evenness generalizing to positivity?  What definition of connectivity of a signed graph is suggested?

\item  What are the bridges (in the sense of Tutte) of a negative or positive circle?  For instance, does the circle have many chords?
\\\A  Voss \cite{Voss} studied chords and other properties of circles in $\G$ of given parity.  Which of these generalize to circles of given sign in $\S$?

In general the bridges of a circle can be anything.  This question should be asked of signed graphs of special kinds.  An example is quasibalanced signed graphs (Section \ref{S:qbal}), in which a bridge of a negative circle must be balanced.

\label{Slast}
\end{enumerate}


\section{Counting Negative Circles}\label{count}

The \emph{negative circle vector} is $c^-(\S) = (c_1^-,c_2^-,\ldots,c_n^-) \in \bbR^{n}$, where $n=|V|$ and $c_k^-(\S)$ is the number of negative circles of length $k$.  These numbers and vectors have had some attention, mostly aimed at underlying complete graphs.  I distinguish two types of question: about numbers, and about vectors.

\begin{enumerate}[\Q 1.]

\item  Characterize the set of numbers of negative circles of some fixed length of all signatures of a fixed graph $\G$; that is, the set $\mathbf C_k(\G) = \{c_k^-(\G,\s) : \s$ is a signature of  $\G\}$ for some fixed $k$, $3\leq k \leq n$.
\\\A  Only $\G=K_n$ has been studied, as far as I know.  Very recently there are remarkably strong results on the possible numbers of negative triangles by Kittipassorn and M\'esz\'aros \cite{KittiMesz}.  Two-thirds of the numbers from $0$ to $\binom{n}{3}$ cannot appear.  
There is a function $f(n)$ such that $[f(n),\binom{n}{3}-f(n)] \subseteq \mathbf C_3(K_n)$ for $n\gg0$.  
And more, especially:

\begin{thm}[\cite{KittiMesz}]\label{T:km}
Let $0=a_0\leq b_0 \leq \cdots \leq a_m \leq b_m \approx n^{3/2}$ where $a_{i+1}=b_i+(n-2)-i(i+1)$ and $b_i-a_i=i(i-1)$; then $a_i,b_i\in\mathbf C_3(K_n)$; also, for $0\leq i\leq m$ we have $c_3^-(K_n,\s)\in[a_i,b_i] \iff l(K_n,\s)=i$.  
\end{thm}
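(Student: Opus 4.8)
The plan is to derive an exact formula for the number of negative triangles of a signed complete graph in terms of its frustration index and the structure of a minimum negative‑edge set, and then to read the theorem off it. Since both $c_3^-(K_n,\s)$ and $l(K_n,\s)$ are switching invariants, I would first switch $(K_n,\s)$ so that the number of negative edges equals $l:=l(K_n,\s)$ (possible by \eqref{E:swfrust}), and let $H$ be the graph on $V$ whose edge set is the set of negative edges, so $|E(H)|=l$. Write $P_2(H)=\sum_v\binom{d_H(v)}{2}$ for the number of paths of length two in $H$, $N_3(H)$ for its number of triangles, and $M_2(H)$ for the number of unordered pairs of vertex‑disjoint edges of $H$, so that $\binom{l}{2}=P_2(H)+M_2(H)$. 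Classify the triangles of $K_n$ by their number $j\in\{0,1,2,3\}$ of negative edges, with $n_j$ of each type, and count incidences: each negative edge lies in $n-2$ triangles, each length‑two path of $H$ completes to a unique triangle of $K_n$, and each triangle of $H$ is counted once. This gives $n_1+2n_2+3n_3=(n-2)l$, $n_2+3n_3=P_2(H)$, $n_3=N_3(H)$. A triangle is negative exactly when $j\in\{1,3\}$, so $c_3^-=n_1+n_3$, and solving the system yields
\[
c_3^-(K_n,\s)=(n-2)\,l-2P_2(H)+4N_3(H)=(n-2)\,l-l(l-1)+2M_2(H)+4N_3(H).
\]
A telescoping of the defining recursion gives $b_i=i(n-2)$ and $a_i=i(n-1-i)$, so the displayed right‑hand side is precisely $a_l+2M_2(H)+4N_3(H)$.

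The bounds then drop out with essentially no further work. Since $M_2(H),N_3(H)\ge0$, the identity immediately gives $c_3^-(K_n,\s)\ge a_l$. For the upper bound, each length‑two path of $H$ lies in at most one triangle of $H$ while each triangle of $H$ contains three such paths, so $P_2(H)\ge 3N_3(H)$; hence $M_2(H)+2N_3(H)\le P_2(H)+M_2(H)=\binom{l}{2}$, i.e.\ $2M_2(H)+4N_3(H)\le l(l-1)$, and therefore $c_3^-(K_n,\s)\le(n-2)l=b_l$. Thus $l(K_n,\s)=i$ forces $c_3^-(K_n,\s)\in[a_i,b_i]$, with equality at the lower (resp.\ upper) endpoint exactly when $H$ is a star $K_{1,l}$ (resp.\ a matching).

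Next I would show $a_i,b_i\in\mathbf C_3(K_n)$ for $0\le i\le m$. The hypothesis $b_m\approx n^{3/2}$ forces $m$, hence $i$, to be of order $\sqrt n$, so there is ample room in $K_n$. Take the signature whose negative edges form a matching of $i$ edges as a candidate for $b_i$, and the one whose negative edges form a star $K_{1,i}$ as a candidate for $a_i$; in each case $N_3(H)=0$, and the identity gives the claimed value \emph{provided} the frustration index equals $i$. This is the one genuinely graph‑theoretic step: $l(K_n,\s)\le|E^-|=i$ is trivial, and for the reverse inequality I would exhibit $i$ pairwise edge‑disjoint negative triangles — attach to each negative edge (for the star, to each of its edges through the centre) a private third vertex chosen outside the vertex set of $E^-$, possible since $i=O(\sqrt n)$ — and invoke the bound recalled in Section~\ref{frust} that the number of edge‑disjoint negative circles is at most $l(K_n,\s)$.

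Finally, the equivalence. The chain $0=a_0\le b_0\le\cdots\le a_m\le b_m$ makes the intervals $[a_i,b_i]$ for $0\le i\le m$ pairwise disjoint (indeed $a_{i+1}-b_i=(n-2)-i(i+1)\ge0$ throughout this range), so if $c_3^-(K_n,\s)\in[a_i,b_i]$ and $l(K_n,\s)=j$ with $j\le m$, the containment just proved puts $c_3^-(K_n,\s)\in[a_j,b_j]$, forcing $j=i$; combined with the previous paragraph this is the asserted equivalence. The one loose end, and what I expect to be the only real nuisance, is excluding $l(K_n,\s)>m$: one must verify that a \emph{switch‑reduced} negative‑edge graph ($\Delta(H)\le(n-1)/2$, since no single‑vertex switch can reduce $|E(H)|$) with more than $m$ edges always yields $c_3^-(K_n,\s)>b_m$. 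For $l\le(n-1)/2$ this follows from $c_3^-(K_n,\s)\ge l(n-1-l)$ and the concavity of $l\mapsto l(n-1-l)$; for larger $l$ one uses $\Delta(H)\le(n-1)/2$ to bound $P_2(H)$ from above, which forces $M_2(H)$, and with it $c_3^-(K_n,\s)$, to be large. I regard the counting identity as the crux of the argument; everything after it is short.
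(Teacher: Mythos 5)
Your counting identity is correct and is the right engine: with $H$ the negative-edge set of a switching attaining $l$, one does get $c_3^-(K_n,\sigma)=(n-2)l-2P_2(H)+4N_3(H)=a_l+2M_2(H)+4N_3(H)$, hence $a_l\le c_3^-\le b_l$ whenever $l(K_n,\sigma)=l$, and your star/matching examples (with $l=i$ guaranteed for $i\le m$, e.g.\ by Lemma \ref{L:smallneg}) give $a_i,b_i\in\mathbf C_3(K_n)$.  (The survey itself offers no proof --- the theorem is quoted from \cite{KittiMesz} --- so I am judging your argument on its own terms.)  One small point: for the ``forcing $j=i$'' step you need the gaps $a_{i+1}-b_i=(n-2)-i(i+1)$ to be \emph{strictly} positive up to $m$, which is really the definition of $m$; with only ``$\ge$'' a shared endpoint $b_i=a_{i+1}$ would break the iff, since the star with $i+1$ negative edges attains $a_{i+1}$.

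The genuine gap is the step you yourself flag: excluding $l(K_n,\sigma)>m$, i.e.\ showing that $l>m$ forces $c_3^->b_m\approx n^{3/2}$.  Your two tools do not cover the critical range.  The concavity bound $c_3^-\ge a_l=l(n-1-l)$ works only while $m<l\le n-2-m$ (by symmetry $a_{n-2-m}=a_{m+1}>b_m$); for larger $l$ it is small or even negative.  The switch-reduced degree bound $\Delta(H)\le(n-1)/2$ gives $P_2(H)\le(\Delta-1)l\le\tfrac{n-3}{2}l$, hence $M_2(H)\ge\binom{l}{2}-\tfrac{n-3}{2}l=\tfrac{l}{2}(l-n+2)$, and plugging this into the identity yields only $c_3^-\ge a_l+l(l-n+2)=l$.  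So for $l$ roughly between $n$ and $n^{3/2}$ your bounds give $c_3^-\gtrsim l$, an order of magnitude short of the needed $c_3^->b_m\approx n^{3/2}$, and nothing in your sketch rules out a signature with, say, $l\approx n$ and $c_3^-\approx n$.  Closing this requires using \emph{global} switching-minimality (minimality against all vertex sets, not just single vertices): local minimality alone permits many vertices of $H$-degree near $(n-1)/2$, e.g.\ two disjoint stars $K_{1,(n-1)/2}$ pass the degree test but are not globally reduced, and one must show that a globally reduced $H$ with $l\gg\sqrt n$ cannot keep $n_2+n_3$ close to $\tfrac12(n-2)l$.  That trade-off (spread degrees force many one-negative-edge triangles; concentrated degrees force either $H$-triangles, which increase $c_3^-$, or a profitable switching) is the technical heart of \cite{KittiMesz} and is not a short consequence of $\Delta(H)\le(n-1)/2$.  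Everything before that point in your write-up I consider correct.
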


I am not aware of similarly strong conclusions about longer circles, but several papers by Popescu and Tomescu have partial results.  
For example, all $c_k^-(K_n,\s)>0$ if $\s$ is neither balanced nor antibalanced \cite{Pop1}.  Also:

\begin{thm}[Popescu and Tomescu \cite{PopTom1}]\label{T:ptmin}
Fix $s \leq n/2$ and consider only signatures for which $l(K_n,\s)=s$; then for all lengths $k$, $\min_\s c_k^-(K_n,\s)$ is attained when $E^-$ is a star (if $s<n/2$) and $\max_\s c_k^-(K_n,\s)$ is attained when $E^-$ is a matching \cite{PopTom1}.  
\end{thm}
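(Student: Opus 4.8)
The plan is to switch each signature so that its negative edges form a graph on few vertices, recast the theorem as a purely combinatorial extremal problem about $s$-edge subgraphs of $K_n$, and attack that by expanding the negative-circle count over linear forests and running a compression argument. \emph{Reduction.} By \eqref{E:swfrust} every $\s$ with $l(K_n,\s)=s$ switches to one whose negative edges form a graph $H\subseteq K_n$ with exactly $s$ edges, and by Theorem~\ref{T:switching} the numbers $c_k^-$ are switching invariants, hence functions of $H$ alone; write $c_k^-(H)$ for this value. Both candidate extremizers lie in the class considered: an $s$-edge matching $M_s$ has $l(M_s)=s$ whenever $n\ge 2s$ because its $2s$ vertices already carry $s$ edge-disjoint negative triangles, exactly as in the proof of Proposition~\ref{P:packsmall}; and a star $K_{1,s}$ has $l(K_{1,s})=s$ when $s<n/2$, since deleting its $s$ edges gives ``$\le$'' while the triangles $vu_ic_i$ with distinct apices $c_i\notin\{v,u_1,\dots,u_s\}$ (there are $n-1-s\ge s$ of them) are edge-disjoint negative triangles and give ``$\ge$''. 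So it suffices to prove $c_k^-(K_{1,s})\le c_k^-(H)\le c_k^-(M_s)$ for \emph{every} $s$-edge graph $H$ with $s\le n/2$.

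\emph{An expansion.} A $k$-circle $C$ of $K_n$ is negative iff $|E(C)\cap E(H)|$ is odd, so with $N_k=\binom nk(k-1)!/2$ one has $c_k^-(H)=\tfrac12\bigl(N_k-\sum_{|C|=k}\prod_{e\in E(H)}(1-2\,[e\in E(C)])\bigr)$. Expanding the product and summing over $C$ rewrites the correction term as a signed sum over subsets $T\subseteq E(H)$ of $f(T):=\#\{k\text{-circles of }K_n\text{ containing }T\}$; and $f(T)=0$ unless $T$ is a linear forest (disjoint paths) or, when $|T|=k$, a $k$-cycle, while for a linear forest $T$ with $t$ edges and $r$ components, contracting each path of $T$ gives $f(T)=\binom{n-t-r}{k-t-r}(k-t-1)!\,2^{\,r-1}$. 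This yields
\[
c_k^-(H)=\sum_{\substack{\eset\ne T\subseteq E(H)\\ T\ \text{a linear forest}}}(-2)^{|T|-1}\,2^{\,r(T)-1}\,(k-|T|-1)!\binom{n-|T|-r(T)}{k-|T|-r(T)}\ +\ (-2)^{k-1}z_k(H),
\]
with $r(T)$ the number of path-components of $T$ and $z_k(H)$ the number of $k$-cycles of $H$. Two model cases make this promising: a star has \emph{no} linear-forest subgraph with $\ge 3$ edges (such a subgraph would contain a degree-$3$ vertex) and no cycle, so the sum collapses to orders $1$ and $2$ and reduces to $c_k^-(K_{1,s})=s(n-1-s)(k-3)!\binom{n-3}{k-3}$; a matching has exactly $\binom su$ linear-forest subgraphs of order $u$ (all $u$-matchings) and no cycle, giving $c_k^-(M_s)=\sum_{u\ge 1}(-4)^{u-1}\binom su(k-u-1)!\binom{n-2u}{k-2u}$.

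\emph{The inequalities.} For $k=3$ the expansion is just $c_3^-(H)=sn-\sum_v d_v^2+4z_3(H)=sn-2s-2\bigl(p_3(H)-2z_3(H)\bigr)$, where $p_3(H)=\sum_v\binom{d_v}2$ counts cherries; the upper bound follows from $p_3(H)\ge 3z_3(H)$ (each triangle of $H$ carries three distinct cherries) and the lower bound from $p_3(H)\le\binom s2$ (each cherry determines a distinct pair of edges), with equality exactly at $M_s$, resp.\ $K_{1,s}$. For general $k$ I would prove the two bounds by \emph{compression}. Towards the maximum: while $H$ is not a matching, pick an edge $e$ of $H$ lying in a cherry and replace it by a new edge on two vertices outside $V(H)$ (these exist since $|V(H)|\le 2s-1$ and $s<n/2$; the boundary $s=n/2$ needs a minor variant); this strictly decreases $p_3(H)$, so the process terminates at $M_s$, and the crux is a lemma that no such step decreases $c_k^-(H)$. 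Towards the minimum: dually, transport an edge of $H$ onto a maximum-degree vertex (replacing $xy$ by whichever of $vx,vy$ is a non-edge); this strictly increases $\sum_v\binom{d_v}2$, terminates at $K_{1,s}$, and the crux is that no such step increases $c_k^-(H)$. One computes each step's effect on $c_k^-(H)$ from the expansion by tracking which linear-forest subgraphs acquire or lose the swapped edges.

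\emph{Main obstacle.} The whole weight rests on the monotonicity lemma for a single compression step---equivalently, on showing the signed difference of the two expansions is one-signed. This is genuinely delicate because the factor $(-2)^{|T|-1}$ makes the sum alternate while one edge swap perturbs linear-forest subgraphs of \emph{every} order, so no term-by-term comparison works. I expect the fix to be a telescoping that pairs a linear forest $T$ containing the swapped-out edge with the forest got by swapping the new edge in, governed by monotonicity of the ratios $\binom{n-j-1}{k-j-1}/\binom{n-j}{k-j}=(k-j)/(n-j)$; the hypothesis $s\le n/2$, which both supplies spare vertices and keeps these ratios in the decisive regime, is exactly what should make the pairing come out uniformly---and for $k=3$ this telescoping degenerates to the two elementary inequalities above, a reassuring check. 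A cleaner alternative worth trying is to construct, straight from the expansion, an explicit sign-preserving injection of the negative $k$-circles of $(K_n,\s)$ into those of $(K_n,\s')$ for one compression step $\s\to\s'$; but I expect the telescoping estimate to be the more tractable route.
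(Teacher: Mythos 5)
First, a point of context: the paper does not prove this theorem at all --- it is quoted from Popescu and Tomescu \cite{PopTom1}, and the only thing the paper proves is Lemma \ref{L:smallneg}, which justifies restating their hypothesis $|E^-|=s$ as $l(K_n,\s)=s$. Your opening reduction (switch by \eqref{E:swfrust} so that $E^-$ is an $s$-edge graph $H$, check that the star with $s<n/2$ and the matching with $s\le n/2$ really have frustration index $s$, and then treat the statement as an extremal problem over all $s$-edge subgraphs $H\subseteq K_n$) is correct and is exactly the content of that lemma, run in the other direction. Your inclusion--exclusion expansion of $c_k^-(H)$ over linear forests is also correct --- the coefficient $(-2)^{|T|-1}2^{r(T)-1}(k-|T|-1)!\binom{n-|T|-r(T)}{k-|T|-r(T)}$ checks out, as do the specializations to the star, the matching, and the case $k=3$, which you do settle completely.

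The genuine gap is the one you name yourself: for every $k\ge 4$ the entire theorem hangs on the unproved monotonicity lemma for a single compression step (that moving an edge off a cherry onto two fresh vertices never decreases $c_k^-$, and that moving an edge onto a maximum-degree vertex never increases it). Nothing in the proposal establishes this, and it is not a routine verification: because of the alternating factor $(-2)^{|T|-1}$, one edge swap changes the counts of linear forests of every order in both directions, and the proposed telescoping via the ratios $(k-j)/(n-j)$ is only a guess at how the cancellation might be organized --- you give no pairing, no sign analysis, and no use of the hypothesis $s\le n/2$ beyond heuristics. It is not even clear a priori that $c_k^-$ is monotone along your compression path rather than merely extremal at its endpoints, so the strategy could fail at an intermediate graph even if the theorem is true; ruling this out (or replacing the path argument by a direct sign-preserving injection between negative $k$-circles) is precisely the substance of Popescu and Tomescu's proof, which your writeup does not supply. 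As it stands you have a correct reformulation, a correct identity, and a proof only of the $k=3$ case.
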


Their original theorem assumed $|E^-|=s$ instead of $l(K_n,\s)=s$.  This restatement depends on a lemma:

\begin{lem}\label{L:smallneg}
If $|E^-(K_n,\s)|<n/2$, then $l(K_n,\s)=|E^-|$.
\end{lem}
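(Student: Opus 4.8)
The plan is to show that the obvious bound $l(\S)\le|E^-(\S)|$ (take $\zeta\equiv+$ in the switching formula \eqref{E:swfrust}, $l(\S)=\min_\zeta|E^-(\S^\zeta)|$) is met with equality whenever $|E^-(\S)|<n/2$; equivalently, that no switching of $(K_n,\s)$ produces fewer negative edges than it already has.

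First I would rewrite switching in the language of edge cuts. For a switching function $\zeta$ set $X=\zeta\inv(-)\subseteq V$. Then $\s^\zeta(e_{vw})=-\s(e_{vw})$ precisely when exactly one of $v,w$ lies in $X$, i.e.\ when $e_{vw}$ belongs to the cut $\partial X$ of edges with one endpoint in $X$ and one in $V\setm X$. Hence the negative edges of $\S^\zeta$ form the symmetric difference $E^-(\S)\mathbin{\triangle}\partial X$, and counting its size gives
\[
|E^-(\S^\zeta)| = |E^-(\S)| + |\partial X| - 2\,\bigl|E^-(\S)\cap\partial X\bigr|.
\]
So it suffices to prove $|\partial X|\ge 2\,|E^-(\S)\cap\partial X|$ for every $X\subseteq V$.

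The remaining step is a short arithmetic estimate. In $K_n$ every nonempty proper $X$ has $|\partial X|=|X|\,(n-|X|)\ge n-1$, while $|E^-(\S)\cap\partial X|\le|E^-(\S)|$, and since $|E^-(\S)|$ is an integer strictly less than $n/2$ we have $2\,|E^-(\S)|\le n-1$. Combining, $|\partial X|-2\,|E^-(\S)\cap\partial X|\ge (n-1)-(n-1)=0$ for every nonempty proper $X$ (and trivially for $X=\eset$ and $X=V$, where $\partial X=\eset$). Therefore $|E^-(\S^\zeta)|\ge|E^-(\S)|$ for all $\zeta$, so $l(\S)=|E^-(\S)|$. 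I do not expect a real obstacle here; the only point to get right is that the cut-size lower bound $|\partial X|\ge n-1$ in $K_n$ is exactly offset by the hypothesis-forced inequality $2|E^-|\le n-1$, the worst case being a single-vertex switch $|X|=1$. The same computation incidentally shows the bound is tight and that, under the hypothesis, a switch can at best preserve the number of negative edges, never decrease it.
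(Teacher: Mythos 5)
Your proposal is correct and is essentially the paper's own argument: the paper likewise writes the switched negative-edge count via the cut $D$ between $\zeta\inv(-)$ and $\zeta\inv(+)$, getting the change $\delta=r(n-r)-2|D\cap E^-|$ with $r(n-r)\geq n-1$ for a nontrivial switch, and concludes $\delta\geq n-1-2|E^-|\geq 0$ from the hypothesis $2|E^-|\leq n-1$. Your write-up just makes the symmetric-difference bookkeeping and the trivial cases $X=\eset,V$ explicit.
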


\begin{proof}
By Equation \eqref {E:swfrust}, $l(K_n,\s)=E^-(K_n,\s^\zeta)$ for a suitable switching function $\zeta$.  Switching means negating the signs of all edges in the cut $D$ between $\zeta\inv(-)$ and $\zeta\inv(+)$.  That adds $\delta=r(n-r) - 2|D\cap E^-(K_n,\s)|$ negative edges, where $r=|\zeta\inv(-)|$.  This number must be negative if $|E^-(K_n,\s^\zeta)|<|E^-(K_n,\s)|$, but then $0<r<n$ so $r(n-r)\geq n-1$/  Thus, $\delta \geq n-1 - 2|E^-(K_n,\s)| \geq 0$.
\end{proof}

The extrema for $k>3$ with larger frustration index are more difficult and are not known (to me, at least), with the obvious exception of the maxima for odd length.

\begin{prop}\label{P:maxknodd}
For odd $k$ with $3\leq k \leq n$, $\max_\s c_k^-(K_n,\s)=c_k^-(-K_n)=c_k(K_n)$.  If $(K_n,\s)$ is not antibalanced, then $c_k^-(K_n,\s)<c_k(K_n)$.
\end{prop}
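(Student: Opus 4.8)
The plan is to think of a signed $K_n$ via the switching formula $l(K_n,\s)=\min_\zeta|E^-(K_n,\s^\zeta)|$ from Equation \eqref{E:swfrust}, but what really matters here is the sign of each $k$-circle, not switching. Fix an odd $k$ with $3\le k\le n$. Every $k$-subset $S$ of $V(K_n)$ and every cyclic ordering of $S$ gives a $k$-circle, and the total number of $k$-circles is $c_k(K_n)=\binom{n}{k}\cdot\frac{(k-1)!}{2}$, independent of the signature. So the inequality $\max_\s c_k^-(K_n,\s)\le c_k(K_n)$ is trivial, and proving $c_k^-(-K_n)=c_k(K_n)$ is immediate: in $-K_n$ a circle is negative exactly when it has odd length, and every $k$-circle has odd length $k$. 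This already gives the first equality in the display. The content is therefore the strict inequality: if $(K_n,\s)$ is \emph{not} antibalanced, then \emph{some} $k$-circle is positive, i.e.\ $c_k^-(K_n,\s)<c_k(K_n)$.

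First I would reduce ``not antibalanced'' to a local statement. By Harary's Balance Theorem (Theorem \ref{T:balance}) applied to $-\S$, the graph $(K_n,\s)$ is antibalanced if and only if $-\S$ is balanced, equivalently $\S$ switches to $-K_n$, equivalently every triangle of $\S$ is negative (the triangles generate the binary cycle space of $K_n$, and on $K_n$ a ``Harary bipartition of $-\S$'' either exists globally or is obstructed on some triangle). So if $\S$ is not antibalanced there is a positive triangle; call its vertices $a,b,c$. For $k=3$ we are already done, so assume $k\ge5$ (odd).

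Next I would build a positive $k$-circle from this positive triangle by a pivoting/path-replacement trick. Pick $k-3$ further vertices $v_1,\dots,v_{k-3}$ (available since $k\le n$). I want to replace, say, the edge $bc$ of the positive triangle $abc$ by a path $b\,v_1\,v_2\cdots v_{k-3}\,c$ through the new vertices, getting a $k$-circle $C$ on $\{a,b,c,v_1,\dots,v_{k-3}\}$. The sign of $C$ is $\s(abc)\cdot\s(bc)\cdot\s(P)$ where $P$ is the replacement path (the $\s(bc)$ removes the deleted edge and $\s(P)$ adds the new edges). The key point is that I have freedom in how $P$ threads the new vertices: there are many Hamiltonian $b$--$c$ paths on $\{b,c,v_1,\dots,v_{k-3}\}$ inside $K_n$, and I claim their signs are not all equal unless an antibalance-type obstruction forces it. Concretely, swapping two consecutive interior vertices $v_i,v_{i+1}$ of such a path multiplies its sign by $\s(\triangle)$ for the triangle on the three vertices $\{v_{i-1},v_i,v_{i+1}\}$ (or the analogous end-triangle); so either all these threading triangles are negative — in which case I can still choose orientations of $P$ and adjust — or I find two threadings $P,P'$ of opposite sign, and then one of $C=abc$-with-$P$ or $abc$-with-$P'$ is positive.

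The remaining case to dispatch is when \emph{every} triangle among $\{b,c,v_1,\dots,v_{k-3}\}$ (and more generally every triangle I touch) is negative, i.e.\ the induced subgraph on those vertices together with $a$ is ``locally antibalanced'' except for the one edge forced by $\s(abc)=+$. Here I would switch so that all edges at $a$ are positive; then since $abc$ is positive, $bc$ is positive, while (by the all-triangles-negative hypothesis on the rest) the subgraph on $\{b,c,v_1,\dots\}$ looks all-negative. Then take $C$ to be: $a$, then a path from $a$ into the ``all-negative part'' using an even number of negative edges and returning to $a$ — more simply, take $C = a\,b\,v_1\,v_2\cdots v_{k-3}\,c\,a$ and compute: the edges $ab$ and $ac$ are positive (switched), the edge $bc$ is absent, and the path $b v_1\cdots v_{k-3} c$ has $k-2$ edges all negative, giving sign $(+)(+)(-)^{k-2}=(-)^{k}=-$ since $k$ is odd — that is negative, not what I want, which signals that in this fully-antibalanced local picture every $k$-circle through these vertices is indeed negative, consistent with $\S$ being (at least locally) antibalanced. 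The way out is that if $\S$ is \emph{globally} not antibalanced, the positive triangle $abc$ cannot be extended by an all-negative region forever: I would instead locate the positive triangle and a \emph{fourth} vertex $d$ such that not all of $\s(abd),\s(acd),\s(bcd)$ are negative (this must happen somewhere if $\S$ is not antibalanced — otherwise switching at $a$ to make $ab,ac,ad$ positive would force $bc,bd,cd$ positive too and propagate), and then on $\{a,b,c,d\}$ there is a positive $4$-circle or I re-pick the positive triangle; with a genuinely ``mixed'' set of $4$ vertices the pivoting argument of the previous paragraph produces both signs of $k$-circle for every odd $k\le n$.

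The main obstacle I anticipate is exactly this bookkeeping: turning ``not antibalanced'' (a global statement) into a concrete small configuration (a positive triangle plus a vertex making the local $4$-vertex signed graph non-antibalanced) and then checking that the path-threading swaps genuinely flip the sign of a $k$-circle rather than accidentally always multiplying by $+$. I expect the cleanest route is: (i) WLOG switch so all edges at one vertex $a$ are positive; (ii) non-antibalance then says the link of $a$, which is a signed $K_{n-1}$ on $V\setminus a$, is not all-negative, i.e.\ has a positive edge $uv$; (iii) then $auv$ is a positive triangle and, crucially, $u$ and $v$ have the property that the circle closing up through $a$ uses the two positive edges $au,av$; (iv) now build a $k$-circle $a,u,x_1,\dots,x_{k-3},v,a$ and note its sign is $(+)(+)\prod\s(\text{interior edges})$, and by choosing the interior vertices $x_i$ and their order I can make $\prod\s(\text{interior edges})$ equal to $+$: indeed $u$ has $\ge 1$ positive incident edge in the link (namely $uv$) unless the link is such that... — here one shows that if it were impossible to make the interior product positive, the link would be forced all-negative, contradicting (ii). That is the heart of the argument and the step I would spend the most care on.
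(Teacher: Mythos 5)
Your reduction is fine as far as it goes: the first equality is trivial, and ``not antibalanced'' is correctly translated into ``some triangle is positive'' (equivalently, after switching so that all edges at a vertex $a$ are positive, the link of $a$ contains a positive edge $uv$). The gap is in the heart of your construction, step (iv), and it is a real one, not just bookkeeping. Take the signature in which, after that switching, every edge of the link of $a$ is negative except the single edge $uv$. This $\S$ is not antibalanced (the triangle $auv$ is positive), yet every circle of your prescribed shape $a,u,x_1,\dots,x_{k-3},v,a$ is negative: the interior $u$--$v$ path cannot use the chord $uv$, so it consists of $k-2$ negative edges and the circle's sign is $(-1)^{k-2}=-$ for odd $k\geq 5$. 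Hence your claimed dichotomy ``either some threading has positive interior product or the link is all-negative'' is false. The re-threading analysis is also off: transposing two consecutive interior vertices keeps the edge between them and exchanges two other edges, so it multiplies the path sign by the sign of a \emph{quadrilateral}, not a triangle; in the example above all the relevant quadrilaterals are positive, so re-threading never changes the sign. Positive $k$-circles do exist in that example, but only ones in which $u$ and $v$ are consecutive (traversing the positive edge $uv$) or which avoid $a$ altogether---exactly the circles your template excludes---and you yourself flag this step as the one left undone.

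For comparison, the paper's proof sidesteps all of this with a generation argument: the circles of any one odd length $k$ generate the binary cycle space of $K_n$ (they generate the quadrilaterals, and quadrilaterals allow shortening an odd circle to a triangle). Since $Z\mapsto\s(Z)(-1)^{|Z|}$ is a homomorphism from the cycle space to $\{+,-\}$, if every $k$-circle were negative this function would equal $+$ on a generating set, hence on every triangle, forcing all triangles to be negative, i.e.\ $(K_n,\s)$ antibalanced; the contrapositive gives the strict inequality with no case analysis. If you want to keep a constructive route, you must at least allow $u$ and $v$ to be adjacent on the circle (so the positive edge $uv$ itself can be used), which handles the counterexample above, but you would still need an argument covering the remaining sign patterns of the link.
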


\begin{proof}
It is clear that the maximum is attained by $-K_n$.  

The \emph{binary cycle space} of $K_n$ is the class of all subsets of $E$ that can be obtained from circles by the operation of symmetric difference.  It is generated by the circles of any one odd length (because those circles generate all quadrilaterals and the quadrilaterals permit shortening an odd circle to a triangle).  It follows that if $k$ is odd and $c_k^-(K_n,\s)=c_k(K_n)$, then all triangles are negative, so $(K_n,\s)$ is antibalanced.
\end{proof}

The next graphs to study could be the complete bipartite ones, also beginning with quadrilaterals.
\label{Cnum}

\item  Characterize the set ${\mathbf C}(\G) = \{c^-(\G,\s) : \s$ is a signature of  $\G\}$ of negative circle vectors of signatures of $\G$.
\\\A  Suppose a graph $\G$ of order $n$ has circles of lengths $0 < k_1 < k_2 < \cdots < k_m \leq n$ but not of any other lengths.  Then $\dim\mathbf C \leq m$ and we can think of the negative circle vectors as living in $\bbR^m$.  Here are some strengthenings of the question:
\label{Cvect}

\begin{enumerate}[\ref{Cvect}a.]
\item What is the dimension of $\mathbf C$?  In particular, when is $\dim\mathbf C=m$, the largest it could be?
\\\A  Schaefer and Zaslavsky \cite{Schaefer} find that we do have $\dim {\mathbf C}=m$ for $\G = K_n$ and $K_{r,s}$, where $m=n-2$ and $\min(r,s)-1$, respectively.  His method requires considerable symmetry of $\G$.
Since $\mathbf0 = c^-(\G,+) \in \mathbf C$, the linear and affine dimensions of $\dim\mathbf C$ are equal, which is a convenience.
\item What is the cone (with apex at the origin) generated by $\mathbf C$?  (That means finding the homogeneous inequalities satisfied by $\mathbf C$.)  What are the extreme rays of the cone?  Is there any combinatorial meaning to the extreme rays?
\\\A  Hard; unknown even for $K_n$.  All that is known is inequalities for individual components $c_k^-$ of the negative circle vectors.
\item What is the convex hull $\conv\mathbf C$?  (That means finding all inequalities satisfied by $\mathbf C$.)  What are the extreme points of $\conv\mathbf C$, and what is their significance?
\\\A  Harder than the cone!
\item What restrictions can one find for actual negative circle vectors?  For instance, Popescu found that a vector $c^-(K_n,\s) = (c_3^-,c_4^-,\ldots,c_n^-)$ cannot have an odd component except for $c_3^-$; he even found the smallest possible even part of each $c_k^-$ \cite{Pop2,PopTom1}.
\item Are there vectors other than $\mathbf0$ that can easily be guaranteed to be in $\mathbf C$?  
\\\A  The all-negative signature of a simple graph gives vector $c^-(-\G)=(c_3,0,c_5,0,\ldots)$ where $c_k$ is the total number of circles of length $k$.  Moreover, given a negative circle vector $c^-(\G,\s)$, $c^-(\G,-\s)$ is determined via $c_{2k}^-(\G,-\s)=c_{2k}^-(\G,\s)$ and $c_{2k-1}^-(\G,-\s)=c_{2k-1}(\G)-c_{2k-1}^-(\G,\s)$.  Schaefer and Zaslavsky made use of this fact in computing dimensions.
\end{enumerate}
\end{enumerate}


\section{Eigenvalues}\label{eigen}

Even and odd circles have a surprising influence on eigenvalues of a graph.  Let $V=\{v_1,\ldots,v_n\}$.  The \emph{adjacency matrix} $A(\S)=(a_{ij})_{n\times n}$ has $a_{ij}=$ the number of positive edges $v_iv_j$ less the number of negative edges $v_iv_j$.  The \emph{Laplacian matrix} is defined as $L(\S)=D(|\S|)-A(\S)$, where $D(\G)$, the \emph{diagonal degree matrix} of a graph, is the diagonal matrix whose entry $d_{ii}$ is the degree of $v_i$.  Write $\mu_{\max}(\S)$ for the largest eigenvalue of $A(\S)$ and $\lambda_{\max}(\S)$ for the largest eigenvalue of $L(\S)$.  An unsigned graph can be treated as all positive, which gives its adjacency matrix $A(\G)=A(+\G)$ and Laplacian matrix $L(\G)=L(+\G)$, or as all negative, which gives the \emph{signless Laplacian matrix} $Q(\G)=L(-\G)$.  The last-named has attracted much attention since it was popularized by Cvetkovi\'c in \cite{Cvet} et al., but rarely in what I consider the proper perspective, which is that of signed graphs.

Why signed graphs?  Hou, Li, and Pan \cite{HLP} investigated the Laplacian matrices of signed graphs.  They discovered a remarkable fact.

\begin{thm}[\cite{HLP}]\label{T:domeigen}
For every signature of a connected graph $\G$ we have $\lambda_{\max}(-\G) \geq \lambda_{\max}(\G,\s)$, with equality if and only if $(\G,\s)$ is antibalanced.
\end{thm}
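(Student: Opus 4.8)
The plan is to compare the two Laplacian quadratic forms. For any signature $\s$ and any unit vector $x \in \bbR^n$, the Laplacian quadratic form of a signed graph is
\[
x^\top L(\S) x = \sum_{e_{ij}} \bigl(x_i - \s(e_{ij}) x_j\bigr)^2,
\]
summed over edges $e_{ij}$ of $\G$, since $L(\S) = D(\G) - A(\S)$ expands termwise. In particular, for $\S = -\G$ every term is $(x_i + x_j)^2$. So the first step is to record this identity and deduce that $\lambda_{\max}(\G,\s) = \max_{\|x\|=1} x^\top L(\G,\s) x$ by the Rayleigh principle, since $L$ is symmetric. The second step is to bound each term: $(x_i - \s(e_{ij})x_j)^2 \leq (|x_i| + |x_j|)^2$ for either sign, because $|x_i - \s x_j| \leq |x_i| + |x_j|$. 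Hence if $y = (|x_1|,\ldots,|x_n|)$ is the vector of absolute values, which is also a unit vector, then $x^\top L(\G,\s) x \leq y^\top L(-\G) y \leq \lambda_{\max}(-\G)$. Taking the max over $x$ gives $\lambda_{\max}(\G,\s) \leq \lambda_{\max}(-\G)$, which is the inequality.

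The third and more delicate step is the equality characterization. The ``if'' direction is easy: if $(\G,\s)$ is antibalanced, then $-\S = (\G,-\s)$ is balanced, so by Theorem \ref{T:switching} it switches to $+\G$; switching is a diagonal $\pm1$ conjugation of the adjacency and Laplacian matrices, hence preserves the spectrum, so $\lambda_{\max}(\G,-\s) = \lambda_{\max}(\G) = \lambda_{\max}(+\G)$. But $-\s$ and the all-negative signature differ by switching exactly when $\s$ and $+$ do... more directly: $(\G,\s)$ antibalanced means $\s$ switches to $-$, so $L(\G,\s)$ and $L(-\G)$ are conjugate by a signature matrix and share their top eigenvalue. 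For the ``only if'' direction, suppose $\lambda_{\max}(\G,\s) = \lambda_{\max}(-\G)$, and let $x$ be a unit eigenvector of $L(\G,\s)$ for its top eigenvalue. Tracing through the inequality chain forces $y^\top L(-\G) y = \lambda_{\max}(-\G)$, so $y$ (the absolute-value vector) is a top eigenvector of $L(-\G)$; and it forces $(x_i - \s(e_{ij})x_j)^2 = (|x_i| + |x_j|)^2$ for every edge. Since $\G$ is connected, the Perron--Frobenius theorem applied to $L(-\G)$ (which, being $D - A(-\G) = D + A(\G)$, is entrywise nonnegative and irreducible up to the diagonal) tells us the top eigenvector $y$ is strictly positive, so no $x_i$ vanishes. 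The edge equality $|x_i - \s(e_{ij})x_j| = |x_i| + |x_j|$ with $x_i, x_j \neq 0$ then pins down the relative sign of $x_i$ and $x_j$: defining $\zeta(v_i) = \operatorname{sign}(x_i) \in \{+,-\}$, one checks $\s(e_{ij}) = -\zeta(v_i)\zeta(v_j)$ on every edge, i.e. $\s^\zeta \equiv -$, so $(\G,\s)$ switches to $-\G$ and is antibalanced.

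The main obstacle is the ``only if'' direction, specifically getting strict positivity of the absolute-value eigenvector $y$ so that the sign function $\zeta$ is well-defined on all of $V$. This is where connectedness of $\G$ is essential and where one invokes Perron--Frobenius; one must phrase it carefully because $L(-\G) = D(\G) + A(\G)$ is a nonnegative irreducible matrix (adding the diagonal degree matrix does not break irreducibility), so its spectral radius is a simple eigenvalue with a positive eigenvector, and that spectral radius is $\lambda_{\max}(-\G)$ since all eigenvalues of the symmetric PSD matrix $L(-\G)$ are real and nonnegative. A secondary subtlety: one should note that $y$ being a top eigenvector of $L(-\G)$ does not a priori mean $x$ itself is obtained from $y$ by sign changes, but the termwise equality conditions supply exactly that, so everything is forced. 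The remaining bookkeeping — that $\s(e_{ij}) = -\zeta(v_i)\zeta(v_j)$ on each edge follows from $|x_i - \s(e_{ij}) x_j| = |x_i| + |x_j|$ with both coordinates nonzero — is a short case check on the four sign patterns and I would not spell it out in full.
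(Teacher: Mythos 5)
Your proof is correct, and there is nothing in the paper to compare it against: the survey states Theorem~\ref{T:domeigen} purely as a citation to Hou, Li, and Pan \cite{HLP} and gives no proof, only Reff's remark that the off-diagonal entries of $L(-\G)$ are extremal. Your argument is essentially the standard one from that literature and fills in exactly that mechanism: the identity $x^\top L(\G,\s)x=\sum_{e_{ij}}(x_i-\s(e_{ij})x_j)^2$, the termwise bound by $(|x_i|+|x_j|)^2$ giving $\lambda_{\max}(\G,\s)\leq\lambda_{\max}(-\G)$, and, for equality, the chain of forced equalities together with Perron--Frobenius applied to the nonnegative irreducible matrix $L(-\G)=D(\G)+A(\G)$ (connectedness used here, and correctly noted as essential) to get a strictly positive absolute-value eigenvector, after which $\zeta(v_i)=\operatorname{sign}(x_i)$ switches $\s$ to the all-negative signature, i.e.\ $(\G,\s)$ is antibalanced. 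Two small remarks: the paper does not assume $\G$ is simple, so for multigraphs sum over edges with multiplicity (a positive loop contributes $0$ and a negative loop $4x_i^2$, which does not disturb the inequality or the equality analysis); and in your ``if'' direction the detour through $-\s$ is unnecessary, since antibalance says directly that $\s$ switches to all-negative, so $L(\G,\s)$ is conjugate to $L(-\G)$ by a diagonal $\pm1$ matrix.
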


That $\lambda_{\max}(-\G)\geq\lambda_{\max}(+\G)$\ was known, but this theorem shows there is much more going on.  Reff \cite{CUG} proved the same result even more generally, for complex unit gains on $\G$.  Why is it so?  Reff (personal communication) observes that it is due to the fact that the nonzero off-diagonal entries of $L(-\G)$, which are $-1$, have the least real part possible for a complex number of modulus $1$.  I wonder if there is also a combinatorial explanation.

Signed graphs also seem likely to be implicated in eigenvalue phenomena discovered by Nikiforov and Yuan.  
Nikiforov \cite{Niki} found an eigenvalue property that implies a graph is not bipartite.  Assume $n\gg0$.  The theorem (simplified) says that if $\mu_{\max}(\G) > n^2/2$, then $\G$ cannot be bipartite because it contains a triangle.  In fact, it has a circle of every length $t\leq n/320$, in particular of every such odd length.  

Now let $\lambda_{\max}(-\G)$ be the largest eigenvalue of $Q(\G)=L(-\G)$.  Yuan and Nikiforov proved that if $\G$ contains no circle of a certain odd, or even, length $l$, then $\lambda_{\max}(-\G)$ has an explicit upper bound.    
Yuan \cite{Yuan} proved that if $k\geq3$, $n\geq 110k^2$, and $\lambda_{\max}(-\G) > \lambda_{\max}(-K_k \vee \overline K_{n-k})$, where $\vee$ denotes the join (i.e., the disjoint union together with all edges between the two graphs), then $G \supseteq C_{2k+1}$.  Nikiforov and Yuan \cite{NikiYu} proved a similar result for even circles $C_{2k}$.  
In other words, there are spectral criteria that imply existence of negative or positive circles, since the Laplacian is that of the all-negative signature, in which bipartiteness equals balance and the circles of interest are odd (that is, positive) or even (negative).

\begin{enumerate}[\Q 1. ]
\item\label{Cj:adj-unbal}
Does Nikiforov's theorem, with bipartiteness changed to balance, apply to all signed simple graphs that meet the conditions of the theorem of \cite{Niki}.  If not, to which ones does it apply?

\item\label{Cj:lap}
Do Yuan's theorem and that of Nikiforov and Yuan generalize to signed simple graphs that meet conditions similar to those of their theorems, with $C_{2k+1}$ replaced by a negative $C_l$ and $C_{2k}$ changed to a positive $C_l$?
\end{enumerate}


\section{Signed Digraphs}\label{sd}

A signed digraph $(D,\s)$ is a directed graph $D$ with signed edges.  In a signed digraph we look at signed cycles, where by a \emph{cycle} I mean a connected subgraph with in-degree and out-degree $1$ at every vertex.  If every cycle is positive, we say $(D,\s)$ is \emph{cycle balanced}.  The properties of cycle balance are very different from those of balance; nevertheless one can ask the same questions.  

\subsection{Cycle balance vs.\ balance}\

There is a theorem that ties cycle balance tightly to balance.

\begin{thm}[Harary et al.\ \cite{HNC}]\label{T:strongbal}
A strongly connected signed digraph is balanced if it is cycle balanced.
\end{thm}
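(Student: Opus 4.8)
The plan is to produce an explicit switching that turns every edge positive; by Theorem~\ref{T:switching}, applied to the underlying signed graph (orientations forgotten), this is exactly what it means for $(D,\s)$ to be balanced. The engine of the argument is the observation that cycle balance upgrades to the stronger statement that \emph{every closed directed walk in $(D,\s)$ is positive}. To see this, note that the edge multiset of a closed directed walk has equal in-degree and out-degree at each vertex, so it decomposes into simple directed cycles; since the sign of the walk is the product of the signs of the edges, it equals the product of the signs of these cycles, each of which is $+$ by cycle balance, so the walk is positive.

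Next I would imitate the spanning-tree construction from the algorithmics-of-balance discussion, but with directed paths. Fix a root $r$; by strong connectivity, for each vertex $v$ there is a directed path $P_{rv}$ from $r$ to $v$ and a directed path $P_{vr}$ from $v$ to $r$. Set $\zeta(v):=\s(P_{rv})$. This is well defined: if $P$ and $P'$ are two directed $r$--$v$ paths, then $P\cdot P_{vr}$ and $P'\cdot P_{vr}$ are closed directed walks, hence positive, so $\s(P)=\s(P_{vr})=\s(P')$, using that signs lie in $\{\pm1\}$. The same computation in fact shows that $\s(W)=\zeta(v)$ for \emph{every} directed walk $W$ from $r$ to $v$, not merely for paths.

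Finally I would verify $\s^\zeta\equiv{+}$. Given a directed edge $e$ from $v$ to $w$, the concatenation $P_{rv}\cdot e$ is a directed walk from $r$ to $w$, so $\zeta(v)\,\s(e)=\zeta(w)$, i.e.\ $\s(e)=\zeta(v)\zeta(w)$; hence $\s^\zeta(e)=\zeta(v)\s(e)\zeta(w)=+$. Thus $(D,\s)^\zeta$ is the all-positive signed graph on $|D|$, and Theorem~\ref{T:switching} yields balance. I expect the only delicate point to be phrasing the decomposition of a closed directed walk into directed cycles in the paper's precise sense (a routine Eulerian argument on the edge multiset, peeling off one simple directed cycle at a time), and being careful that strong connectivity is used exactly where it is needed, namely to supply both $P_{rv}$ and $P_{vr}$; everything after that is a one-line switching computation.
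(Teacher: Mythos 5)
Your proof is correct, and in fact the paper itself offers nothing to compare it with: Theorem~\ref{T:strongbal} is stated with a citation to Harary, Norman, and Cartwright \cite{HNC} and is not proved in the text, so your argument is a genuine addition rather than a variant of an in-paper proof. The structure is sound and uses only tools already on hand: (i) cycle balance upgrades to positivity of every closed directed walk, because the edge multiset of such a walk has in-degree equal to out-degree at every vertex and therefore peels apart into directed cycles in the paper's sense (note this step needs no connectivity of the multiset, only the degree condition, and it handles repeated edges correctly since the sign of the walk is a product over the multiset); (ii) strong connectivity supplies directed paths $P_{rv}$ and $P_{vr}$, and the closed-walk positivity makes $\zeta(v):=\s(P_{rv})$ well defined, indeed shows $\s(W)=\zeta(v)$ for every directed $r$--$v$ walk $W$; (iii) applying this to $P_{rv}\cdot e$ gives $\s(e)=\zeta(v)\zeta(w)$ for each edge $e$ from $v$ to $w$, so $\s^\zeta\equiv{+}$ and Theorem~\ref{T:switching} (or simply the fact that switching preserves circle signs) yields balance of the underlying signed graph, which is exactly the conclusion the paper intends, as its Corollary~\ref{C:cyclebal} makes clear. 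This is essentially the classical potential-function/switching route and is the natural one given the paper's algorithmics-of-balance discussion; the only stylistic remark is that step (iii) is already contained in step (ii), since well-definedness for all walks immediately gives the edge identity, so the write-up could be compressed.
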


\begin{cor}\label{C:cyclebal}
A signed digraph is cycle balanced if and only if every strong component, ignoring directions, is balanced; i.e., has a Harary bipartition.
\end{cor}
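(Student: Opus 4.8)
The plan is to prove the two implications separately, using Theorem \ref{T:strongbal} for the forward direction and an elementary ``localization'' observation for the converse. The whole point is that directed cycles, and hence cycle balance, are properties local to the strong components.

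For the converse, suppose every strong component of $(D,\s)$, with directions ignored, is balanced. Let $Z$ be any directed cycle of $(D,\s)$. Traversing $Z$ shows that all of its vertices are mutually reachable, so they lie in a single strong component $S$, and since the edges of $Z$ join vertices of $S$ they too belong to $S$. Thus $|Z|$ is a circle of the signed graph $|S|$, which is balanced by hypothesis, so $\s(Z)=+$. As $Z$ was arbitrary, $(D,\s)$ is cycle balanced.

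For the forward direction, suppose $(D,\s)$ is cycle balanced and let $S$ be a strong component. By definition $S$ is a strongly connected signed digraph, and it is cycle balanced because every directed cycle of $S$ is a directed cycle of $(D,\s)$ and hence positive. Theorem \ref{T:strongbal} then applies to $S$ and yields that $S$, ignoring directions, is balanced. By Harary's Balance Theorem \ref{T:balance}, being balanced is equivalent to possessing a Harary bipartition, which completes the proof.

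I expect the only point needing care to be the implicit claim in the forward direction that balance of $|S|$ as an \emph{undirected} signed graph follows from cycle balance of $S$ as a \emph{directed} one: a strong component may contain undirected circles that are not directed cycles, so one cannot argue circle by circle. This is precisely the content of Theorem \ref{T:strongbal}, so that work is already done; the corollary is then just the observation that cycle balance is inherited by strong components and, conversely, is determined on them.
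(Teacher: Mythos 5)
Your proof is correct and follows essentially the same route as the paper: cycles live inside strong components, so cycle balance is checked componentwise, and Theorem \ref{T:strongbal} converts cycle balance of a strong component into balance; the converse is the trivial observation that a directed cycle in a balanced component is a positive circle. You have merely written out both directions in more detail than the paper's one-sentence argument.
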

\begin{proof}
Every cycle is contained in a strong component, so all strong components should be cycle balanced, but by the theorem, that means every strong component is balanced.
\end{proof}

\subsection{Cycles all negative (or positive)}\

Here is a pair of basic questions.

\begin{enumerate}[\Q $3^-$]
\item  In which signed digraphs are all cycles negative?
\\\A  The two articles I know of, \cite{HLM, Ch}, provide examples but the question remains open as far as I know.
\item  In which signed digraphs are they all positive?
\\\A  Corollary \ref{C:cyclebal} answers this.  The contrast between the positive and negative questions is striking.

\end{enumerate}

\subsection{Signed digraph frustration}\

Here are the questions about covering all negative, or positive, cycles by edges, or by vertices:

\begin{enumerate}[\Q $1^-$]
\item  What is the directed frustration index $l(D,\s)$, i.e., the smallest number of edges whose deletion results in cycle balance?
\\\A  If $S$ is a set of edges such that $(D,\s)\setm S$ is balanced, then $(D,\s)\setm S$ is also cycle balanced; therefore the directed frustration index is bounded above by the undirected frustration index $l(\G,\s)$, where $\G$ is the undirected underlying graph of $D$.  Under what conditions are they equal?
\label{sd-l}
\item  What is the directed frustration number $l_0(D,\s)$, i.e., the smallest number of vertices whose deletion results in cycle balance?
\\\A  As with the index, the directed frustration number is bounded above by $l_0(\G,\s)$.  When are they equal?
\label{sd-l0}
\item[\Q $\ref{sd-l}^+$]  What is the smallest number of edges that cover all positive cycles?  (Reminder:  Not necessarily all positive circles!)
\item[\Q $\ref{sd-l0}^+$]  What is the smallest number of vertices that cover all positive cycles?
\end{enumerate}

Montalva et al.\ \cite{MAG} showed that all four questions are NP-complete by reducing them to the known problems of covering all odd or even cycles in an unsigned digraph.  That still leaves a sufficiency of open questions.

It follows from Theorem \ref{T:strongbal} that for a strongly connected digraph, $l(D,\s)=l(\G,\s)$ and $l_0(D,\s)=l_0(\G,\s)$.  That partially, but only partially, answers Questions 1--2$^-$.


\section{The End}

And that concludes my survey.

\end{document}